\newcommand*\mcupinn[2]{\vcenter{\hbox{$\mathsurround=0pt
  \ifx\displaystyle#1\textstyle\else#1\fi\bigcup$}}}
\newtheorem{theorem}{Theorem}[section]
\DeclareFontFamily{U} {MnSymbolC}{}
\DeclareFontShape{U}{MnSymbolC}{m}{n}{
  <-6> MnSymbolC5
  <6-7> MnSymbolC6
  <7-8> MnSymbolC7
  <8-9> MnSymbolC8
  <9-10> MnSymbolC9
  <10-12> MnSymbolC10
  <12-> MnSymbolC12}{}
\DeclareFontShape{U}{MnSymbolC}{b}{n}{
  <-6> MnSymbolC-Bold5
  <6-7> MnSymbolC-Bold6
  <7-8> MnSymbolC-Bold7
  <8-9> MnSymbolC-Bold8
  <9-10> MnSymbolC-Bold9
  <10-12> MnSymbolC-Bold10
  <12-> MnSymbolC-Bold12}{}
\DeclareSymbolFont{MnSyC} {U} {MnSymbolC}{m}{n}
\DeclareMathSymbol{\diamondtimes}{\mathbin}{MnSyC}{125}
\newtheorem{lemma}[theorem]{Lemma}
\newtheorem{corollary}[theorem]{Corollary} 
\newtheorem{prop}[theorem]{Proposition}
\newtheorem{obs}{Observation}
\newtheorem*{theorem*}{Theorem}{\bf}{\it}
\newtheorem*{proposition*}{Proposition}{\bf}{\it}
\newtheorem*{observation*}{Observation}{\bf}{\it}
\newtheorem*{lemma*}{Lemma}{\bf}{\it}
\theoremstyle{definition}
\newtheorem{definition}[theorem]{Definition}
\theoremstyle{remark}
\newtheorem{remark}[theorem]{Remark}
\newcommand{\Z}{\mathbb Z}
\newcommand{\R}{\mathbb R}
\newcommand{\C}{\mathbb C}
\def\XXint#1#2#3{{\setbox0=\hbox{$#1{#2#3}{\int}$ }
\vcenter{\hbox{$#2#3$ }}\kern-.6\wd0}}
\begin{document}
\title[]{A discrete harmonic function bounded on a large portion of $\Z^2$ is constant}

\author[L. Buhovsky]{Lev Buhovsky }
\address{L.B.: School of Mathematical Sciences, Tel Aviv University, Tel Aviv 69978, Israel}
\email{levbuh@post.tau.ac.il}
\thanks{L.B. supported in part by ISF Grant 1380/13,
and by the Alon Fellowship}

\author[A. Logunov]{Alexander Logunov}
\address{A.L.:School of Mathematics, Institute for Advanced Study, Princeton, NJ 08540;}
\address{ School of Mathematical Sciences, Tel Aviv University, Tel Aviv 69978, Israel;}
\address{Chebyshev Laboratory, St. Petersburg State University, 14th Line V.O., 29B, Saint Petersburg 199178 Russia}
\email{log239@yandex.ru}
\thanks{A.L. supported  in part by ERC Advanced Grant~692616 and ISF Grants~1380/13, 382/15}

\author[E. Malinnikova]{Eugenia Malinnikova}
\address{E.M.: Department of Mathematical Sciences,
Norwegian University of Science and Technology
7491, Trondheim, Norway}
\email{eugenia.malinnikova@ntnu.no}
\thanks{E.M. supported in part by  Project 213638
of the Research Council of Norway}

\author[M. Sodin]{Mikhail Sodin}
\address{M.S.: School of Mathematical Sciences, Tel Aviv University, Tel Aviv 69978, Israel}
\email{sodin@post.tau.ac.il}
\thanks{M.S. supported in part by ERC Advanced Grant~692616 and ISF Grant~382/15}

\dedicatory{To Fedya Nazarov with admiration}

\begin{abstract} An improvement of the Liouville theorem for discrete harmonic functions on $\mathbb{Z}^2$ is obtained. More precisely, we prove that there exists a positive constant $\varepsilon$ such that if $u$ is discrete harmonic  on $\mathbb{Z}^2$ and for each sufficiently large square $Q$ centered at the origin $|u|\le 1$  on a $(1-\varepsilon)$ portion of $Q$ then $u$ is constant.
 \end{abstract}
\maketitle
%%%%%%%%%%%%%%%%%%%%%%%%%%%%%%%%%%%%%%%%%%%%%
\section{Introduction}
Let  $u$  be a discrete harmonic function on the lattice $\mathbb{Z}^2$, i.e., a function satisfying the mean value property: the value of $u$ at any point of $\mathbb{Z}^2$ is equal to the average of the four values at the adjacent points.
 
The Liouville theorem  states that if $u$ is bounded on   $\mathbb{Z}^2$, say $|u|\le 1$ everywhere, then $u\equiv const$. This statement  is classical \cite{C},\cite[Theorem 5]{H} and well known.

In the present  paper we obtain a somewhat unexpected  improvement of the Liouville theorem. We show that if $|u|$ is bounded  on $(1-\varepsilon)$ portion  of $\mathbb{Z}^2$, where $\varepsilon>0 $ is a sufficiently small numerical constant,  then $u$ is a constant function. The precise statement is given below.

\subsection{Main result.}
We  partition  the plane $\mathbb{R}^2$ into  unit squares (cells) so that  the centers of squares have integer coordinates and  identify the cells with the elements of $\mathbb{Z}^2$.

 Given a positive integer $N$, we denote by $Q_N$ the  square on $\mathbb{Z}^2$ with center at the origin and of size $(2N+1)\times (2N+1)$: $$Q_N:=\{(n,m): |n|,|m| \leq N \}.$$ The translation of this square by a vector $x$ with integer coordinates is denoted by $Q_N(x)$.
 For a set $S \subset \mathbb{Z}^2$, we denote by $|S|$ the number of elements  in $S$.

  Let $\varepsilon \in (0,1)$ be a positive number.
 We  say that $|u|$ is bounded by $1$ on $(1-\varepsilon)$ portion of the lattice $\mathbb{Z}^2$ if  for some $N_0>0$
 $$| Q_N \cap \{|u| \leq 1\} | \geq (1-\varepsilon) |Q_N| $$
for all $N \geq N_0$.

\begin{theorem}
\label{Main}
 There exists $\varepsilon>0$ such that if $u$ is a harmonic function on $\mathbb{Z}^2$ and $|u|$ is bounded by $1$  on $(1-\varepsilon)$ portion of  $\mathbb{Z}^2$, then $u$ is  constant.
\end{theorem}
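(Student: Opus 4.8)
The plan is to combine growth estimates for discrete harmonic functions with a counting/measure-theoretic argument. First I would recall the basic gradient estimate: a discrete harmonic function $u$ on a square $Q_{2N}$ satisfies $|u(x)-u(y)|\lesssim \frac{1}{N}\,\sup_{Q_{2N}}|u|$ for neighbouring $x,y\in Q_N$, and more generally a discrete Harnack-type/Cauchy-type bound controls finite differences of $u$ on $Q_N$ by $N^{-1}$ times the sup over a concentric square twice as large. Iterating this, if $u$ were bounded by $M_N := \sup_{Q_N}|u|$, then the oscillation of $u$ on $Q_{N/2}$ is at most $C M_{2N}/N$. The classical Liouville theorem is exactly the statement that if $M_N$ stays bounded then these oscillation bounds force $u$ constant. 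The point of the present theorem is that we do \emph{not} know $M_N$ is bounded — only that $u$ is bounded on a $(1-\varepsilon)$-portion of each large square — so the first task is to upgrade ``bounded on most of $Q_N$'' to a genuine pointwise growth bound like $M_N \le N^{C}$ or even $M_N = o(N)$, after which the gradient estimate finishes the job.

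The key mechanism I would exploit is that a discrete harmonic function cannot be large on a sparse set without the mean value property propagating that largeness. Concretely, suppose $|u(x_0)|$ is huge at some $x_0\in Q_N$. Writing $u(x_0)$ as an average (via the discrete Poisson kernel / exit distribution of the simple random walk) of the values of $u$ on the boundary of a square $Q_R(x_0)$ with $R\sim N$, and knowing that the ``bad'' set $\{|u|>1\}$ occupies at most an $\varepsilon$-fraction of $Q_{2N}$, one gets that the random walk from $x_0$ must hit the bad set with substantial probability, or else $|u(x_0)|\le$ (something like) $1 + \varepsilon\cdot(\text{sup of }|u|)$. This is a self-improving inequality: let $M = M_{2N}$; then for every point, $|u|\le 1 + C\varepsilon M$ on a large sub-square, and a suitable covering/iteration argument (shrinking the square by a fixed factor at each step and each time discarding only an $\varepsilon$-fraction) should yield $M_{N/2}\le C$ once $\varepsilon$ is small enough — because the bad fraction, being controlled by area while the estimate improves geometrically, cannot sustain unbounded growth. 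The heart of the matter is a quantitative statement of the form: \emph{if $u$ is harmonic on $Q_{2N}$, $|u|\le 1$ outside a set $E$ with $|E|\le \varepsilon|Q_{2N}|$, then $\sup_{Q_N}|u|\le F(\varepsilon)$ with $F(\varepsilon)\to 1$ as $\varepsilon\to 0$} (or at least $F(\varepsilon)$ an absolute constant).

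The main obstacle, and where the real work lies, is proving exactly that quantitative local estimate, and in particular ruling out the scenario in which the bad set $E$ is adversarially placed so as to ``shield'' a large region — e.g. $E$ forming thin walls that the random walk is forced to cross. One must show the random walk from a typical point of $Q_N$ spreads out enough (an isoperimetric or heat-kernel lower bound on $\Z^2$) that it cannot avoid an $\varepsilon$-fraction with probability close to $1$ \emph{uniformly}, while simultaneously handling the fact that on $E$ we have no upper bound on $|u|$ at all, only on its complement. The likely route is a stopping-time decomposition: run the walk until it first exits a medium square or first hits $E$; on the first event we gain a factor, on the second we recurse on a smaller scale with a smaller remaining budget for the bad set. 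Balancing these — choosing the scale ratios and tracking how the $\varepsilon$-budget is consumed so the iteration terminates with an absolute bound — is the crux; everything after (plugging into the gradient estimate to conclude $u$ is constant, exactly as in the classical proof) is routine.
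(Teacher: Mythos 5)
Your plan hinges on one claim that is both unproven and, I believe, not true in the form you need: that if $|u|\le 1$ outside a set $E\subset Q_{2N}$ with $|E|\le\varepsilon|Q_{2N}|$, then $\sup_{Q_N}|u|\le F(\varepsilon)$ with $F$ an absolute constant. The paper's Theorem~(A) only establishes $\sup_{Q_{[N/2]}}|u|\le e^{a(\varepsilon)N}$ with $a(\varepsilon)\to 0$, i.e.\ an \emph{exponential} bound with small rate, not a uniform bound; and the example $u(n,m)=\sin(\pi n/2)\,e^{bm}$, $e^{b}+e^{-b}=4$, shows that once $\varepsilon$ is not tiny the sup can genuinely grow exponentially, so the best one can hope for in general is precisely a bound of the form $e^{a(\varepsilon)N}$. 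With only $e^{a(\varepsilon)N}$ in hand, the gradient estimate $|u(x)-u(y)|\lesssim N^{-1}\sup_{Q_{2N}}|u|$ does \emph{not} ``finish the job'': it gives an oscillation of order $N^{-1}e^{2a(\varepsilon)N}$, which tells you nothing. The paper closes this gap by a second, competing estimate, Theorem~(B): a discrete three-circle (Remez/Hadamard) argument showing that any non-constant $u$ bounded on a $(1-\varepsilon)$-fraction of every $Q_K$ must grow at least like $e^{bN}$ with $b$ a fixed numerical constant; choosing $\varepsilon$ so small that $a(\varepsilon)<b$ gives the contradiction. Your outline contains no analogue of this lower bound, and without it the argument cannot close.

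The random-walk heuristic you offer for the local estimate also has a specific flaw. Writing $u(x_0)$ via the exit distribution of $Q_R(x_0)$ uses harmonic measure on the one-dimensional boundary $\partial Q_R(x_0)$, whereas the hypothesis controls only the two-dimensional density of the bad set $E$; an $\varepsilon$-fraction of $Q_{2N}$ by area can cover an arbitrarily large fraction of a given boundary square, so there is no a priori relation ``area fraction $\Rightarrow$ harmonic-measure fraction.'' More fundamentally, your stopping-time decomposition stops the walk upon hitting $E$, but on $E$ you have \emph{no} upper bound for $|u|$ whatsoever, so there is nothing to recurse with: you are attempting a propagation-of-smallness argument without the a priori sup bound $M$ that every such argument (including the paper's Theorem~\ref{th:three}) requires, and that is exactly the hard part. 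The paper instead establishes the exponential upper bound by an entirely different route: it passes to rotated coordinates, observes that a harmonic function vanishing on two consecutive diagonals restricts to a polynomial of controlled degree on each further diagonal, applies a discrete Remez inequality to propagate smallness along lines, and then runs a covering argument with ``good rectangles'' to globalize. None of this is captured by the random-walk sketch, and the step you label ``the crux'' is, as written, a gap rather than a lemma.
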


 \begin{remark}
There is no continuous analog of Theorem \ref{Main} for harmonic
functions in $\mathbb R^2$. For instance, let $\Pi$ denote the
semi-strip
$\Pi=\bigl\{z\colon \text{Re}(z)>0, \  |\text{Im}(z)|<\pi  \bigr\}$,
and let $L=\partial\Pi$ be oriented so that $\Pi$ is on the right-hand side
of $L$. Then the integral
\[
E(z) =
\frac1{2\pi {\rm i}} \int_L \frac{e^{e^\zeta}}{\zeta-z}\,
\text{d}\zeta\,, \qquad z\in\mathbb C\setminus \Pi,
\]
has an analytic continuation on $\mathbb C$, which is bounded
outside $\Pi$ (see, for instance, \cite[Ch 3,
Problems 158-160]{PS}). Obviously, the harmonic function $H(z)=\text{Re}E(z)$ is
also bounded outside $\Pi$.

Furthermore, given an arbitrarily narrow curvilinear semi-strip
$\Pi$, symmetric with respect to the real axis and such that the
intersection of $\Pi$ with any vertical line consists of an open
interval, replacing the function $e^\zeta$ by another analytic function,
one can modify this construction to get an entire (and then, harmonic) function bounded outside $\Pi$. See, for instance the discussion in~[2].
\end{remark}

\begin{remark} The following simple example shows that the statement of Theorem \ref{Main} cannot be extended to higher dimensions. First, we consider the function $u_0:\mathbb{Z}^2\to \R$ defined by $u_0(x,y)=0$ when $x\neq y$ and $u_0(x,x)=(-1)^x$. This function  is not harmonic  but is an eigenfunction of the discrete Laplace operator,
\[\Delta u_0(x,y)=u_0(x+1,y)+u_0(x,y+1)+u_0(x-1,y)+u_0(x,y-1)-4u_0(x,y)=-4u_0(x,y).\]
Then we define a  function on $\mathbb{Z}^3$  by
\[u(x,y,z)=c^{z}u_0(x,y),\quad{\text{where}}\ \ c+c^{-1}=6,\]
and check that $u$ is a non-zero harmonic function on $\mathbb{Z}^3$ that vanishes everywhere except for the hyperplane $\{(x,y,z): x=y\}$.
\end{remark}

\subsection{Toy question and two examples}
A simpler uniqueness question can be asked in  connection to Theorem \ref{Main}. 
Let a discrete harmonic function $u$ be equal to zero on $(1-\varepsilon)$ portion of $\mathbb{Z}^2$. Does $u$ have to be zero identically?

 Theorem \ref{Main} implies the affirmative answer to that question if $\varepsilon>0$ is sufficiently small. On the other hand the statement is wrong for $\varepsilon=1/2$. One can construct a non-zero discrete harmonic function $u$ on $\mathbb{Z}^2$, which is equal to zero on half of $\mathbb{Z}^2$. Namely, $u$ may have zero values on a half-plane $ \{ (x,y) \in \mathbb{Z}^2: x-y \geq 0 \}$ without being zero everywhere. It is not difficult to see that on each next diagonal we can choose one value and then the rest of the values are determined uniquely, the details of such construction are given in Section \ref{ss:obs}.

Going back to the assumption of Theorem \ref{Main}, we note that there is a discrete harmonic function, which is bounded on $3/4$ of $\mathbb{Z}^2$. The following simple example was drawn to authors' attention by Dmitry Chelkak.
 Let $$u(n,m)= \sin(\pi n/2)e^{bm},$$ where $b$ is the positive solution of $e^b+e^{-b}=4$.
 It is easy to check that $u$ is a discrete harmonic function and $|u|$ is bounded by $1$ on $(2\Z\times\Z)\cup(\Z\times\Z_-)$.
 
We don't know the precise value of $\varepsilon$ for either the uniqueness question  or the boundedness question. One may also ask if those constants are equal. 

\subsection{Two theorems}
 The proof of Theorem \ref{Main} is based on two statements, which compete with each other.

\begin{theorem*}[{\bf{A}}]  
 For all sufficiently small $\varepsilon>0$,   there exist $a=a(\varepsilon)>0$ and $N_0=N_0(\varepsilon)$ such that  if   
\[| Q_N \cap \{|u| \leq 1\} | \geq (1-\varepsilon) |Q_N|,\quad N>N_0(\varepsilon),\] 
then
$$ \max\limits_{Q_{[N/2]}} |u| \leq e^{aN}. $$
Moreover,  $a(\varepsilon) \to 0$ as $\varepsilon \to 0$.

\end{theorem*}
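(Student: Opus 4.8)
The plan is to obtain the exponential bound $\max_{Q_{[N/2]}}|u|\le e^{aN}$ by combining a discrete mean-value estimate with a careful measure-theoretic argument that controls how much of a given square can be ``bad'' (i.e.\ where $|u|>1$). The starting point is the observation that a discrete harmonic function satisfies not only the $4$-point mean value property but also a mean value property over discs of any radius: there is a probability kernel (the exit distribution of the lattice random walk, or simply iterated averaging) such that $u(x)$ is an average of the values of $u$ on $Q_r(x)$ with weights comparable to $|Q_r|^{-1}$ on a definite proportion of the square. Consequently, if one could ensure that $\{|u|\le 1\}$ occupies a $(1-\delta)$ portion of $Q_r(x)$ for a \emph{specific} well-chosen $r$, one would get a bound on $|u(x)|$ in terms of $\max_{Q_r(x)}|u|$ and $\delta$. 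The difficulty is that the hypothesis controls $\{|u|\le 1\}$ only on squares \emph{centered at the origin}, and only \emph{on average} over scales — a single square $Q_r(x)$ around an off-center point could be entirely bad.

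The key device to get around this is a Vitali/Chebyshev-type covering argument at a chosen dyadic scale. I would fix a scale $L$ comparable to $N$ (say $L=[N/4]$), tile $Q_{[3N/4]}$ by disjoint subsquares of side roughly $\sqrt{\varepsilon}\,L$, and note that since the total bad set inside $Q_N$ has size $\le \varepsilon|Q_N|$, only an $O(\sqrt\varepsilon)$ fraction of these subsquares can themselves be more than $\sqrt\varepsilon$-bad. Thus for every point $x\in Q_{[N/2]}$ there is a nearby square $Q_\rho(x)$, with $\rho$ between $c\sqrt\varepsilon L$ and $L$, on which the bad portion is at most $C\sqrt\varepsilon$; here one uses a pigeonhole over a range of scales to guarantee such a ``good'' radius exists for every center. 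On that square apply the disc mean value property: $|u(x)|\le 1 + C\sqrt\varepsilon\,\max_{Q_L(x)}|u|$, because the good points contribute at most $1$ and the bad points contribute at most (their proportion)$\times$(the max), with the kernel weights under control. Iterating this inequality across a chain of $O(N/L)=O(1)$ overlapping squares connecting $Q_{[N/2]}$ to $Q_N$ gives $\max_{Q_{[N/2]}}|u|\le C_1 + C_2\sqrt\varepsilon\,\max_{Q_N}|u|$; but to bootstrap this into an absolute exponential bound one instead runs the argument on a telescoping sequence of scales $N_k=N(1-1/k)$ or compares $\max_{Q_{(1-\eta)N}}|u|$ to $\max_{Q_N}|u|$, extracting at each step a multiplicative loss of $(1+C\sqrt\varepsilon)$ over a unit-proportion shrinking, which compounds to $e^{aN}$ with $a=a(\varepsilon)=C\log(1+C\sqrt\varepsilon)\to 0$ as $\varepsilon\to 0$.

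Concretely, I expect the cleanest route is: (i) prove a quantitative ``discrete sub-mean-value on a good square'' lemma — if $|Q_r(x)\cap\{|u|>1\}|\le\delta|Q_r(x)|$ then $|u(x)|\le 1+C\delta^{1/2}\max_{Q_{2r}(x)}|u|$, using the boundedness and near-uniformity of the discrete harmonic measure of a square seen from its center (a standard but slightly delicate lattice random walk estimate); (ii) a scale-selection lemma: given that the bad set has density $\le\varepsilon$ in $Q_N$, for each $x\in Q_{N/2}$ there is a scale $r(x)\in[\varepsilon^{-1/2},N/10]$, or more precisely in a dyadic range, for which $Q_{r(x)}(x)$ is $C\sqrt\varepsilon$-good — this follows from summing the bad densities over $\log N$ dyadic scales and pigeonholing; (iii) an iteration/chaining step turning the local inequality into $\max_{Q_{N/2}}|u|\le e^{aN}$. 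The main obstacle is step (i) together with making step (ii) deliver a good scale \emph{uniformly for every center simultaneously} without losing more than a $\sqrt\varepsilon$ factor — one has to be careful that the covering squares at the selected scales still see only a $\sqrt\varepsilon$-fraction of bad points after the pigeonhole, which forces the dyadic-range and density bookkeeping to be done with the right constants; the passage from the multiplicative recursion to the exponential bound, and the verification that the resulting rate $a(\varepsilon)\to0$, is then routine.
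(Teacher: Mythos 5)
Your proposal relies exclusively on ``soft'' tools --- mean-value representations, harmonic-measure bounds, and a Vitali covering --- that are equally valid for continuous harmonic functions on $\mathbb{R}^2$. But, as the first Remark in the Introduction explains, the continuous analogue of Theorem~(A) is \emph{false}: there is a non-constant harmonic function on $\mathbb{R}^2$ that is bounded by~$1$ outside an arbitrarily narrow semi-strip (so $\{|u|>1\}$ has density $o(1)$ in large squares centered at the origin) yet grows doubly exponentially along the axis of the strip, violating any bound of the form $\max_{Q_{N/2}}|u|\le e^{aN}$. Any argument using only continuum-valid ingredients therefore cannot prove Theorem~(A), and yours is in this class. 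The paper escapes this trap through a genuinely discrete structural fact: a discrete harmonic function on a rectangle is uniquely determined by its values on an L-shaped subset, and this extension grows by at most a factor $7^{a(R)+b(R)}$ (Observation~\ref{obs2}); combined with the polynomial structure of $U$ along diagonals when $U$ vanishes on two consecutive lines, and the discrete Remez inequality, this yields an \emph{absolute} a~priori bound $|U|\le A^{a(R)+b(R)}$ on ``good'' rectangles, which the covering argument then propagates. None of this machinery appears in your outline.

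Concretely, the step that does not close is the bootstrap. Your sub-mean-value lemma, even granting it, produces inequalities of the form
\[
\max_{Q_{R'}}|u|\ \le\ 1+C\sqrt{\varepsilon}\,\max_{Q_{R''}}|u|,\qquad R'<R''\le N,
\]
relating the maximum on a square to the maximum on a \emph{larger} square inside $Q_N$. Such an inequality has no unconditional content: whenever $\max_{Q_{R''}}|u|\ge(1-C\sqrt{\varepsilon})^{-1}$, the trivial monotonicity $\max_{Q_{R'}}|u|\le\max_{Q_{R''}}|u|$ is already stronger, and there is no a priori control on $\max_{Q_N}|u|$ to terminate the chain. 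Telescoping gives $\max_{Q_{N/2}}|u|\le \sum_{j<k}(C\sqrt{\varepsilon})^j+(C\sqrt{\varepsilon})^k\max_{Q_N}|u|$, and the trailing term cannot be bounded by $e^{aN}$ because $\max_{Q_N}|u|$ may a~priori be doubly exponential in $N$ (exactly as in the continuous counterexample). Read in the contrapositive, your inequality says that if the inner maximum is large then the outer maximum is much larger; that is a three-circle-type \emph{lower} bound in the spirit of Theorem~(B), not the unconditional \emph{upper} bound Theorem~(A) asserts. The whole point of Theorem~(A) is to produce an absolute bound from data on $Q_N$ alone, and this requires the discrete L-shape extension estimate with its built-in $7^{a+b}$ growth --- a mechanism with no continuum analogue and entirely absent from your proposal.
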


\begin{theorem*}[{\bf{B}}] There exists $b>0$ such that the following holds.
If $\varepsilon>0$ is sufficiently small, $N$ is sufficiently large,  $\max\limits_{Q_{[\sqrt N]}} |u| \geq 2$
and 
$$| Q_K \cap \{|u| \leq 1\} | \geq (1-\varepsilon) |Q_K| $$
for every $K \in [\sqrt N , N]$, 
 then 
$$\max\limits_{Q_{[N/2]}} |u| \geq  e^{bN}.$$

\end{theorem*}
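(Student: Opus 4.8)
The plan is to exploit a contrast between \emph{growth from below} forced by the hypothesis $\max_{Q_{[\sqrt N]}}|u|\ge 2$ and the rigidity of discrete harmonic functions. The mechanism I would use is a three-circles / doubling type inequality for discrete harmonic functions: if $u$ is harmonic on $Q_{2K}$ and $M(r):=\max_{Q_r}|u|$, then there is an absolute constant $c>1$ (independent of $K$ and $u$) with $M(2r)\le cM(r)$ is \emph{false} in general, so instead I would argue the other way — once $u$ is somewhat large on a small square, the set where $|u|>1$ must occupy a definite proportion of every larger square, unless $u$ grows geometrically. Concretely, suppose for contradiction that $\max_{Q_{[N/2]}}|u|<e^{bN}$ for a small $b$ to be chosen. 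The idea is to run a ``propagation of largeness'' argument along the scales $K=\sqrt N, 2\sqrt N, 4\sqrt N,\dots,N$: at scale $\sqrt N$ there is a point $p_0$ with $|u(p_0)|\ge 2$; I want to show that at each doubled scale one can still find a point where $|u|$ is at least, say, $\tfrac{3}{2}$ (or some fixed constant $>1$), with the number of such ``large'' points growing, eventually producing so many points with $|u|>1$ inside $Q_K$ that the $(1-\varepsilon)$-portion hypothesis $|Q_K\cap\{|u|\le 1\}|\ge(1-\varepsilon)|Q_K|$ is violated for some $K\le N$ — unless the maximum of $|u|$ has meanwhile grown past $e^{bN}$.

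The key quantitative step is a local statement of the following flavour: \emph{there exist absolute constants $\delta>0$ and $r_0$ such that if $v$ is discrete harmonic on $Q_{2r}$, $|v(0)|\ge 2$, and $|Q_{2r}\cap\{|v|\le 1\}|\ge(1-\delta)|Q_{2r}|$, then $\max_{Q_r}|v|\ge(1+\delta)\,|v(0)|$, or else $\{|v|>1\}$ already contains at least $\delta|Q_r|$ cells}. This is where I would use a discrete gradient / Harnack-type estimate: if $|v(0)|\ge 2$ but $|v|\le 1$ on almost all of a neighbourhood of $0$, then $v$ must have large oscillation near $0$, hence (by the mean value property and summation by parts on the discrete Laplacian) the set $\{|v|>1\}$ must be genuinely two-dimensional, not a thin set. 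Iterating this lemma over $\log_2 \sqrt N$ scales — carefully tracking how the ``large value'' $2$ is maintained or amplified and how the measure of $\{|v|>1\}$ accumulates — I would get that either $\max_{Q_{[N/2]}}|u|$ exceeds $(1+\delta)^{c\log N}$ for a suitable $c$ (which, since the number of scales is $\sim\log N$, only gives polynomial growth, \emph{not} $e^{bN}$ — so this naive version is not enough), or a portion-violation occurs.

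Because the naive scale-doubling only yields polynomial growth, the real argument must instead propagate largeness \emph{at a fixed scale along a chain of translates} rather than across dyadic scales. The hard part, and the main obstacle, is exactly this: upgrading ``$|u|\ge 2$ somewhere in $Q_{[\sqrt N]}$'' to exponential growth $e^{bN}$ over the much larger square $Q_{[N/2]}$. I would handle it by a random-walk / maximum-principle argument: let $X$ be the simple random walk on $\Z^2$, and use the identity $u(x)=\mathbb{E}_x[u(X_\tau)]$ for $\tau$ the exit time from $Q_{[N/2]}$; combine this with the hypothesis that $|u|\le 1$ on a $(1-\varepsilon)$ fraction of every intermediate square to deduce a differential (difference) inequality for $M(K)=\max_{Q_K}|u|$ of the form $M(2K)\ge(1+\eta)^{K}M(K)$ — the gain being \emph{linear in $K$}, not constant, because the ``bad'' set where $u$ can be large has density at most $\varepsilon$ and the random walk from a point of the bad set at scale $K$ must travel distance $\sim K$ before it can reach another bad region, picking up a multiplicative factor at each step by Harnack chaining through the good set. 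Summing $\sum_{j} 2^j\sqrt N \cdot \eta \sim \eta N$ over the dyadic scales from $\sqrt N$ to $N$ gives $\log M([N/2])\gtrsim \eta N$, i.e. $M([N/2])\ge e^{bN}$ with $b=b(\eta)$ absolute. Checking that the Harnack chaining through the good set is not obstructed by the $\varepsilon$-density bad set — i.e. that the good set percolates at every scale — is the delicate point, and is presumably where the smallness of $\varepsilon$ (uniform, absolute) is used.
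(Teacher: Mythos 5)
There is a genuine gap, and it is precisely where you flagged it. You correctly observe that a naive scale-doubling argument only gives polynomial growth and that one must extract a gain that is \emph{linear in $K$} at each dyadic step. But the mechanism you propose to achieve this — Harnack chaining through the ``good'' set combined with a random-walk representation — is not a workable substitute, for two concrete reasons. First, the Harnack inequality controls the oscillation of \emph{nonnegative} harmonic functions, while $u$ here is signed and can oscillate wildly; nothing in the hypotheses gives a sign. Second, the supporting heuristic that ``the random walk from a bad point at scale $K$ must travel distance $\sim K$ before reaching another bad region'' is false: a set of density $\varepsilon$ in $Q_K$ has bad points at typical mutual distance $\sim\varepsilon^{-1/2}$, independent of $K$, so there is no $K$-long corridor through the good set to chain along. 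The ``percolation of the good set'' you hope for is not the right invariant, and the argument as sketched would not produce an inequality of the form $M(2K)\ge(1+\eta)^K M(K)$.

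The missing idea is a \emph{discrete three circle (Hadamard) theorem}: if $u$ is harmonic on $Q_N$, $|u|\le M$ on $Q_N$, and $|u|\le 1$ on at least half the cells of $Q_{[N/4]}$, then $\max_{Q_{[N/2]}}|u|\le CM^\alpha + Ce^{-cN}M$ with absolute $\alpha\in(0,1)$, $c,C>0$. This propagates smallness from a set of \emph{half}-density (not ``nearly full''-density) and automatically produces a bound that is either a genuine power saving $M^\alpha$ or an exponentially small relic term $e^{-cN}M$. The paper's proof of Theorem (B) then chains a fixed number $A$ of squares of side $\sim K/A$ between $Q_K$ and $Q_{2K}$ (not $O(K)$ unit translates), where the $\varepsilon$-density hypothesis with $\varepsilon<1/(100A^2)$ guarantees the half-density condition on each small square, and applies the three circle theorem to each. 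The dichotomy at each step — either $M_i\le 2C e^{-cK/A}M_{i+1}$ (exponential gain, yielding $M(2K)\ge M(K)e^{c_1K}$) or $M_i\le 2CM_{i+1}^\alpha$ (power boost, yielding after $A$ steps $M(2K)\ge M(K)^{32}$) — is what compounds over $\log N$ dyadic scales to give $\max_{Q_{[N/2]}}|u|\ge e^{bN}$. Without the three circle theorem, or some equally quantitative propagation-of-smallness input, the argument cannot close, and the Harnack / random-walk replacement you suggest does not supply it.
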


 \subsection{ Theorem \ref{Main} follows from Theorems (A) and (B)} 
Assume  that $|u|$ is bounded by $1$  on $(1-\varepsilon)$ portion of  $\mathbb{Z}^2$ and $\varepsilon$ is small enough so that the value of $a$ from Theorem (A) is strictly smaller than the value of $b$ from Theorem (B). If $u$ is not constant, then by the Liouville theorem  $\max\limits_{Q_{[\sqrt N]}} |u| \geq 2$ for some large $N$.
Then the conclusion of Theorem (A) contradicts  the conclusion of Theorem (B). 

\begin{remark} 
 As we have mentioned above, there  is no continuous version of Theorem (A), i.e., Theorem (A) is  essentially of discrete nature. On the other hand,  Theorem (B)
 has continuous analogs, which are  related to a version of the Hadamard three circle theorem  for harmonic functions. In fact, a stronger version 
 of Theorem (B) holds for continuous harmonic functions. If $f$ is a non-constant harmonic function on $\mathbb{R}^2$, then, informally speaking,   the larger the set $\{|f|<1\}$ is, the faster $\max_{|z|=R}|f(z)|$  grows as $R\to\infty$.  
\end{remark}  

 It is worth mentioning that we don't know whether  the statement of Theorem \ref{Main} still holds under the weaker assumption that $$|Q_N\cap\{|u|<1\}|\le(1-\varepsilon)|Q_N|\quad{\text{ for\ infinitely\ many}}\ N.$$
Another open question is whether Theorem \ref{Main} remains true if one replaces the boundedness  
by positivity of $u$ on a large portion of $\mathbb{Z}^2$ (for $\varepsilon=0$ the result is true and classical \cite{C}).

\section*{Acknowledgments} 
This work was motivated by the question raised by Lionel Levine
and Yuval Peres about existence of non-trivial translation-invariant
probability measures on harmonic functions on lattices. The discussion
with Dmitry Chelkak was also very helpful. The authors thank
all of them.

\section{Lower bound}
 This section is devoted to the proof of Theorem (B).

\subsection{Discrete three circle (square) theorem}
  We will use a discrete version of the Hadamard three circle theorem. Let $u$ be a discrete harmonic function on $Q_N$.
  Given a square $Q_K$, we  say that $u$ is bounded by $1$ on a half of $Q_K$ if 
 $$| Q_K \cap \{|u| \leq 1\} | \geq \frac{1}{2}|Q_K|.$$ 
\begin{theorem} \label{th:three}
 If $|u|\le 1$ on a half of  $Q_{[N/4]}$ and $|u|\le M$ on $Q_{N}$, then $$ \max_{Q_{[N/2]}}|u| \leq C M^{\alpha} + C e^{-cN} M,$$
 where $\alpha \in (0,1)$ and $c,C>0$ are numerical constants.
\end{theorem}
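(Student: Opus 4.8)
The plan is to reduce the discrete three-square theorem to a standard Hadamard-type interpolation inequality for a suitable quantity, following the usual strategy: convexity of the logarithm of a growth function with respect to $\log R$. First I would introduce, for $1\le R\le N$, the quantity
\[
M(R)=\max_{Q_{[R]}}|u|,
\]
and attempt to prove a submultiplicativity / convexity inequality of the form $M(R)\le M(R/2)^{1-\beta}M(2R)^{\beta}$ for an absolute $\beta\in(0,1)$, at least in the range where $R$ is comparable to $N$. The cleanest route to such an inequality in the discrete setting is \emph{not} through a maximum principle for $\log|u|$ (which fails for discrete harmonic functions) but through comparison with the continuous harmonic measure or, more robustly, through an iteration of a single elementary step: if $|u|\le A$ on $Q_{2R}$ and $|u|\le 1$ on a half of $Q_{R/2}$, then $|u|\le A^{1-\beta}$ on $Q_{R}$ up to a small error. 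This elementary step is where the hypothesis ``$|u|\le 1$ on a half of $Q_{[N/4]}$'' gets used, and it should follow from a Harnack-type / oscillation estimate: the positive discrete harmonic function $A-u$ (and $A+u$) is at least, say, $A/2$ on a definite fraction of $Q_{R/2}$, so by a discrete Harnack chain argument it is bounded below by $cA$ on all of $Q_{R/2}$, unless... — more precisely, one uses that a discrete harmonic function which is $\le 1$ on half of a square and $\le A$ on the square must be $\le 1 + (1-\delta)A$ on the concentric square of half the size, for some absolute $\delta>0$.

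The main obstacle, and the step I expect to require the most care, is precisely this single-step decay estimate with an \emph{absolute} gain $\delta$ independent of $R$ and of $u$. In the continuous setting this is immediate from subharmonicity of $\log^+|f|$ together with the estimate on harmonic measure of a set of positive density; discretely, $\log^+|u|$ is not subharmonic, so I would instead argue directly with the discrete harmonic measure (hitting probabilities of simple random walk). Concretely: fix $x\in Q_{[R]}$, run the walk from $x$ until it exits $Q_{2R}$ or hits the ``good set'' $G=Q_{[R/2]}\cap\{|u|\le 1\}$; since $|G|\ge \tfrac12|Q_{[R/2]}|$, a random-walk estimate on $\Z^2$ shows that the probability $p(x)$ of hitting $G$ before exiting $Q_{2R}$ satisfies $p(x)\ge p_0>0$ with $p_0$ absolute (this uses that $G$ has positive density in a square well inside $Q_{2R}$ and that the walk started anywhere in $Q_{[R]}$ spends a comparable-to-$R^2$ expected time in $Q_{[R/2]}$ before leaving $Q_{2R}$; here one must be slightly careful because $G$ is a cloud of cells, not a solid region, but density plus the local central-limit behavior of the walk suffice). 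Then the optional stopping theorem gives
\[
|u(x)| \le p(x)\cdot 1 + (1-p(x))\cdot M \le 1 + (1-p_0) M,
\]
which is the desired single step with $\delta=p_0$. The exponentially small term $Ce^{-cN}M$ in the statement is an artifact of having to iterate this step only about $\log_2 N$ times (from scale $\sim N^{1/2}$ up to scale $\sim N$, say) and of the additive $+1$ errors accumulating along the iteration; summing a geometric series in $(1-p_0)$ produces the clean bound $CM^{\alpha}+Ce^{-cN}M$ with $\alpha=\alpha(p_0)\in(0,1)$.

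Finally I would assemble the pieces: starting from $|u|\le M$ on $Q_N$ and $|u|\le 1$ on half of $Q_{[N/4]}$, apply the single-step estimate at scales $R_j = N/2^{j}$ for $j=0,1,2,\dots$ down to the first scale comparable to $N/4$; at each application the ``good half'' hypothesis is inherited (the half-density set sits inside every $Q_{[R_j/2]}$ with $R_j\gtrsim N/4$, since $Q_{[N/4]}$ is contained in all of them — one should order the iteration so that the innermost square is $Q_{[N/4]}$ and work outward, or equivalently phrase the induction from large scales inward with the good set fixed as $Q_{[N/4]}\cap\{|u|\le 1\}$). Tracking the bound through $O(\log N)$ steps and controlling the accumulated additive errors by a geometric sum yields $\max_{Q_{[N/2]}}|u|\le CM^{\alpha}+Ce^{-cN}M$, completing the proof. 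The only genuinely delicate quantitative point throughout is the uniform lower bound $p_0$ on the hitting probability of a positive-density ``dust'' set by planar random walk, for which a second-moment / Paley–Zygmund argument applied to the occupation time of $Q_{[R/2]}$, combined with a uniform upper bound on the Green's function on the diagonal, should do the job.
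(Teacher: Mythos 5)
There is a genuine gap, and it lies exactly where you flag ``the step I expect to require the most care.'' Your optional-stopping estimate gives only the \emph{additive} bound $|u(x)| \le 1 + (1-p_0)M$ on the inner square, where $p_0>0$ is an absolute constant bounded away from $0$ but \emph{not} from $1$ (the second-moment argument for hitting a positive-density set is sound, but a density-$1/2$ set can be missed with probability uniformly bounded below, e.g.\ if it occupies one solid half of $Q_{[R/2]}$). This is strictly weaker than the multiplicative single step $A\mapsto A^{1-\beta}$ announced at the outset, and the two are not interchangeable: $1+(1-p_0)A\le A^{1-\beta}$ fails for large $A$. Iteration cannot repair this. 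Iterating the additive step $k$ times gives at best $(1-p_0)^k M + O(1/p_0)$, and a doubling chain between $N/4$ and $N$ allows only $k\sim\log N$ steps (and one would also need the half-density hypothesis at every intermediate scale, whereas it is only given at $Q_{[N/4]}$). That produces $N^{-c}M+C$, which does \emph{not} imply $CM^\alpha+Ce^{-cN}M$: take $M=e^{c''N}$ with $c''$ small; then $N^{-c}e^{c''N}$ eventually exceeds both $e^{\alpha c''N}$ and $e^{(c''-c)N}$. Harmonicity used only through $u(x)=\mathbb{E}\,u(X_\tau)$ retains a single certificate of smallness and cannot produce a power-of-$M$ gain.

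The paper's proof is necessarily different in kind. It computes the discrete Poisson kernel of $Q_N$ explicitly and shows (Lemma A.1) that it extends holomorphically in the horizontal variable to a fixed complex neighbourhood of $[-1/2,1/2]$ with bound $O(1/N)$; consequently the restriction of $u$ to each line $\{m=m_0\}$, $|m_0|<N/2$, agrees with an analytic function $f$ bounded by $C_0M$ there. One then approximates $f$ by its Taylor polynomial $P_d$ with tail error $\le C_0M(L|z|)^{d+1}$, transfers the half-density smallness of $u$ on the line to smallness of $P_d$ at many integer-spaced points, and applies the discrete Remez inequality (Corollary 3.2) to propagate this to the whole interval with multiplicative loss $64^{d}$. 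Balancing the Remez loss against the Taylor tail in $d$ yields precisely $C(M^\beta\sigma^{1-\beta}+e^{-cN}M)$, the second term arising when the optimal $d$ is capped by the number of sample points available on the line; the global statement then follows by propagating line by line (first horizontal, then vertical) and composing the gains. It is this polynomial-approximation-plus-Remez mechanism that supplies the log-convexity which a hitting-probability argument alone cannot.
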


This is a generalization of the discrete three circle theorem proved in \cite{GM}, where the estimate $|u|\le 1$ was assumed on the whole square $Q_{[N/4]}$. The continuous analog of Theorem 2.1 is known, a more general statement can be found in \cite{N1}.
 Comparing the discrete theorem with the continuous one, we see that an error term $C e^{-cN} M$ appears at the RHS. This term has a discrete nature, cannot be removed and makes the three circle theorem in the discrete case weaker than in the continuous case. 

 The proof of Theorem \ref{th:three} can be obtained by a modification of the proof given in \cite{GM}. A simpler approach 
 is  outlined in the Appendix for the convenience of the reader. 

%%%%%%%%%%%%%%%%%%%%%%%%%%%%%%%%%

\subsection{Proof of Theorem (B)}
 We assume that $N$ is sufficiently large and $\varepsilon$ is sufficiently small. More precisely, if $\alpha$ is as in Theorem \ref{th:three}, we first choose a large constant $A$ such that $((1+\alpha)/2)^{A}<2^{-5}$ and then take $\varepsilon<\frac{1}{100A^2}$. The choice of the parameters will be justified later. 

For each integer $K$, $1\le K\le N$, denote  by $M(K)$ the maximum of $u$ over $Q_K$. 
 The following proposition is the main step in the proof of Theorem (B). 
 \begin{prop} \label{pr: M}
 In assumptions of Theorem (B), there exist $C_1,c_1>0$ such that if $M(K)>C_1$ and $K\in[\sqrt N, N/4]$, then 
\begin{equation}\label{eq:propM} 
 M(2K) \geq \min(M^{32}(K), M(K) e^{c_1K}).
\end{equation}
 
\end{prop}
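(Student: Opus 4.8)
\textbf{Proof plan for Proposition \ref{pr: M}.}
The plan is to exploit the discrete three circle theorem (Theorem \ref{th:three}) in a bootstrapping scheme over a dyadic sequence of squares between $Q_K$ and $Q_{2K}$. Write $L = 2K$ and, for $j = 0, 1, \dots, A$, set $K_j := [L/2^{j+1}]$, so that $K_0$ is comparable to $K$ and $K_A$ is comparable to $L/2^{A+1}$; since $A$ is a fixed constant and $K \geq \sqrt N$, all the $K_j$ are still much larger than $\sqrt N$, and in particular the hypothesis of Theorem (B) gives $|Q_{K_j} \cap \{|u| \le 1\}| \geq (1-\varepsilon)|Q_{K_j}|$ for each $j$. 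Because $\varepsilon < \tfrac{1}{100 A^2}$ is tiny, at the relevant scales the set $\{|u| \le 1\}$ occupies far more than half of each square, so ``$|u| \le 1$ on a half of $Q_{[\,\cdot\,/4]}$'' holds at every scale we use.

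The key step is to iterate the estimate $\max_{Q_{[N/2]}} |u| \le C M^\alpha + C e^{-cN} M$ backwards. Apply Theorem \ref{th:three} with the big square $Q_{2K_j}$ (on which $|u|$ is bounded by $M(2K_j)$) and the small square $Q_{[K_j/2]}$ on which $|u| \le 1$ on at least half the cells: this yields, roughly,
\begin{equation*}
M(K_j) \le C\, M(2K_j)^{\alpha} + C e^{-c K_j} M(2 K_j).
\end{equation*}
Now distinguish two cases. If at \emph{some} scale $j$ the second (exponentially small) term dominates, i.e. $C e^{-cK_j} M(2K_j) \geq C M(2K_j)^\alpha$, then $M(2K_j) \geq e^{c' K_j} \geq e^{c_1 K}$ for a suitable $c_1 > 0$ (using $K_j \gtrsim K/2^{A}$ and absorbing the fixed power of $2$), and since $M$ is monotone in the scale, $M(2K) \geq M(2K_j) \geq M(K) e^{c_1 K}$ once $M(K) \le e^{c_1 K}$ — which is exactly the second alternative in \eqref{eq:propM}. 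Otherwise the first term dominates at every scale $j = 0, \dots, A-1$, giving $M(K_j) \le 2C\, M(2K_j)^\alpha$, i.e. $M(2K_j) \ge (M(K_j)/2C)^{1/\alpha}$. Chaining these $A$ inequalities from $j = A-1$ down to $j = 0$, and using $1/\alpha > 1$, we get
\begin{equation*}
M(2K) = M(2K_0) \geq c'' \, M(K_A)^{1/\alpha^{A}} \geq c''\, M\!\left(\tfrac{K}{2^{A}}\right)^{1/\alpha^{A}}.
\end{equation*}
Here the exponent $\alpha^{-A}$ is an arbitrarily large fixed power once $A$ is large (this is precisely why $A$ is chosen with $((1+\alpha)/2)^A < 2^{-5}$, which in particular forces $\alpha^{-A} \ge 2^{6}$ or so); we then need to pass from $M(K/2^A)$ back up to $M(K)$, which costs only a bounded factor if $M$ does not grow too fast on scales below $K$ — and again, if it \emph{does} grow fast there, we are back in the exponential-growth alternative. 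Cleaning up constants (this is where $C_1$ enters: assuming $M(K) > C_1$ lets all the multiplicative constants $C, c'', 2C, \dots$ be swallowed), the surviving exponent is at least $32$, giving $M(2K) \ge M^{32}(K)$.

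The main obstacle is bookkeeping rather than any single hard idea: one must track how the two error terms in Theorem \ref{th:three} interact across the $A$ dyadic scales, ensure that the ``half of $Q_{[\cdot/4]}$'' hypothesis genuinely holds at every intermediate scale (this uses $\varepsilon \ll 1/A^2$ and the fact that $A$ is fixed before $\varepsilon$), and verify that the floor functions $[\,\cdot\,]$ and the bounded ratios $K_j/(K/2^j)$ never degrade the exponent below $32$. The delicate point is the dichotomy: at each scale either the polynomial term or the exponential term of the three circle estimate wins, and one must argue that \emph{uniformly} over the $A$ scales one of the two global conclusions of \eqref{eq:propM} is forced — concretely, if the exponential term ever wins we immediately land on $M(2K) \ge M(K)e^{c_1 K}$, and only if it never wins do we get the clean iterated polynomial bound $M(2K) \ge M^{32}(K)$.
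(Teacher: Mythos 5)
Your overall blueprint — iterate the discrete three circle theorem across roughly $A$ scales between $K$ and $2K$, and at each scale split into the ``polynomial term wins'' case and the ``exponential term wins'' case — does match the paper in spirit. But the specific family of squares you apply Theorem~\ref{th:three} to is the wrong one, and both branches of your dichotomy end up with a real gap, which you notice but do not close.

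You use a \emph{descending dyadic} family of \emph{concentric} squares $K_j = [K/2^j]$, $j = 0, \dots, A$, and Theorem~\ref{th:three} with $Q_{K_j} \subset Q_{2K_j}$. In the polynomial branch, chaining $M(K_j) \le 2C M(2K_j)^\alpha$ with $2K_j \approx K_{j-1}$ gives $M(2K) \gtrsim M(K_A)^{1/\alpha^{A+1}} = M([K/2^A])^{1/\alpha^{A+1}}$. Since $Q_{[K/2^A]} \subset Q_K$, one only has $M([K/2^A]) \le M(K)$, so this does \emph{not} give $M(2K) \gtrsim M(K)^{32}$ — it could be arbitrarily weaker. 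Your suggested repair (``passing from $M(K/2^A)$ back to $M(K)$ costs only a bounded factor, else we are in the exponential case'') is not an argument; there is nothing in the hypotheses forcing $M$ to be roughly flat below scale $K$, and the three circle theorem only controls the ratio $M(K_{j-1}) / M(K_j)^\alpha$, not $M(K) / M(K/2^A)$. In the exponential branch, you obtain $M(2K) \ge M(2K_j) \ge e^{c' K_j}$ for some $j$, but $K_j$ can be as small as $\sim K/2^A$ and, more importantly, this is a bound of the form $M(2K) \ge e^{c'' K}$, \emph{not} $M(2K) \ge M(K) e^{c_1 K}$; the factor $M(K)$ is genuinely missing, and your caveat ``once $M(K) \le e^{c_1 K}$'' concedes exactly this. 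The paper's \eqref{eq:propM} needs the multiplicative $M(K)$ on the right.

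The missing idea is \emph{where} to apply Theorem~\ref{th:three}. The paper uses an \emph{ascending arithmetic} chain of concentric squares $Q_{K}, Q_{K+L}, \dots, Q_{K+AL} = Q_{2K}$ with step $L = [K/A]$, and applies the three circle theorem not to those concentric squares but to a \emph{small shifted} square $Q_L(y_i)$ chosen so that it contains the point $x_i$ where $|u|$ attains $M_i = \max_{Q_{K+iL}} |u|$ and so that $Q_{2L}(y_i) \subset Q_{K+(i+1)L}$. This simultaneously fixes both of your gaps: (a) the chain of inequalities $M_i \le C M_{i+1}^\alpha + C e^{-cK/A} M_{i+1}$ runs \emph{from $M_0 = M(K)$ to $M_A = M(2K)$}, so in the all-polynomial case you chain upward to get $M(K) = M_0 \le M_A^{\alpha_1^A} \le M(2K)^{1/32}$ directly, with no detour through scales below $K$; and (b) in the exponential case one has $M_{i+1} \ge (2C)^{-1} e^{cK/A} M_i \ge (2C)^{-1} e^{cK/A} M_0 = (2C)^{-1} e^{cK/A} M(K)$ by monotonicity $M_i \ge M_0$, and then $M(2K) = M_A \ge M_{i+1} \ge M(K) e^{c_1 K}$, which is exactly the needed multiplicative form. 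The half-density hypothesis \eqref{eq: 1/2} must also be verified for these \emph{shifted} little squares $Q_{[L/2]}(y_i)$ — this is where $\varepsilon < \tfrac{1}{100A^2}$ enters — whereas in your version the half-density requirement on concentric $Q_{[K_j/4]}$ is trivially satisfied, a sign that you are not using the hypothesis at the critical scale. Without the shifted-square device, I do not see how to recover the Proposition along your lines.
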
 
  The constant $32=2^5$ depends on our choice of the parameters, it can be replaced by any other constant provided that $\varepsilon$ is sufficiently small.

 \begin{proof} First, by the assumption, 
$$| Q_{4K} \cap \{|u| > 1\} | \leq \varepsilon |Q_{4K}|.$$
Let  $L=[K/A]$. 
	Then, the inequality above and our choice of $\varepsilon$ imply that 
 \begin{equation} \label{eq: 1/2}
|Q_{[L/2]}(x) \cap \{|u| \leq 1\}| \geq \frac{1}{2} |Q_{[L/2]}(x)|,
 \end{equation}
for any $x \in Q_{2K}$.

\begin{figure}[h!]
\centering
\includegraphics[scale=1.1]{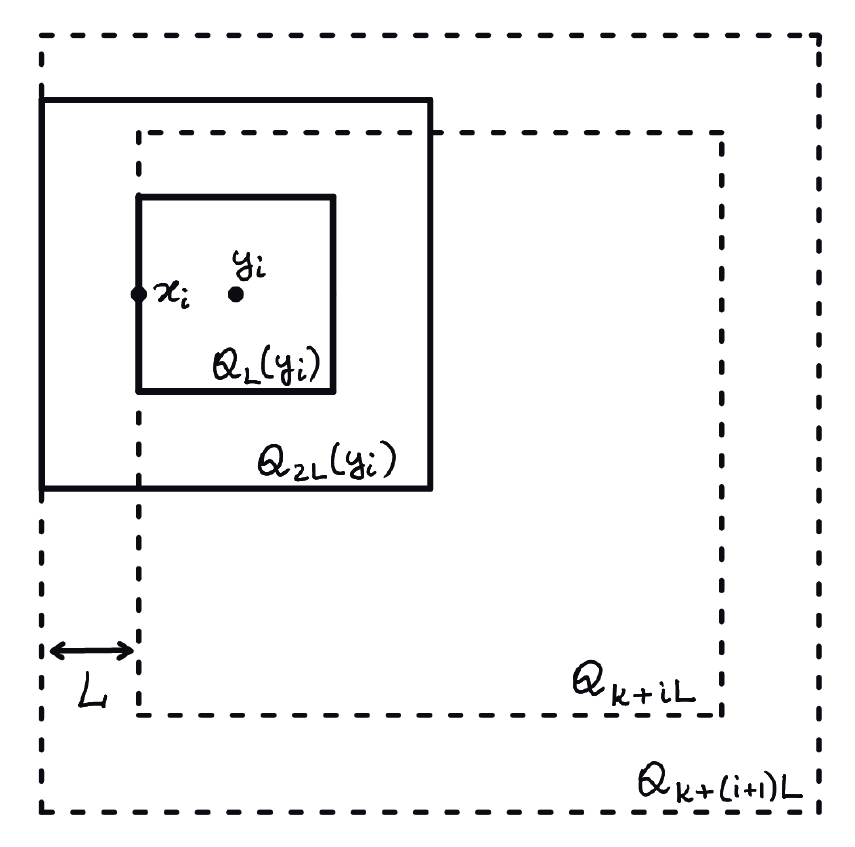}
\caption{\label{fig:prop-M}} 
%To the proof of Proposition \ref{pr: M}
\end{figure}

For $i=0,1,...,A$, denote $M(K+iL)$ by $M_i$ and fix $x_i \in Q_{K+iL}$  such that  $|u(x_i)| =M_i$. Further, we choose $y_i$ such that $x_i\in Q_{L}(y_i)\subset Q_{K+iL}$, see Figure \ref{fig:prop-M}. By \eqref{eq: 1/2} we can apply Theorem \ref{th:three} to the square $Q_L(y_i)$. This gives
$$\max\limits_{Q_{L}(y_i)}|u|  \leq C \max\limits_{Q_{2L}(y_i)}|u|^{\alpha} + C e^{-cK/A} \max\limits_{Q_{2L}(y_i)}|u|.$$
 Therefore, we have
 $$ M_i \leq C  M^\alpha_{i+1} + C e^{-cK/A} M_{i+1}.$$
 Hence, for each $i=0,1,...,A-1$,  at least one of the following inequalities holds:
 $$(i)\quad M_i \leq 2C e^{-cK/A} M_{i+1},\quad\quad 
(ii)\quad M_i \leq 2C  M^\alpha_{i+1}.$$
We note that if (i) holds for at least one $i \in [0,A-1]$, then
 $$M(2K) \geq M(K) e^{c_1K}$$
with sufficiently small $c_1$, depending on $c,A,C$ (since $N$ and therefore $K$ is large enough). This finishes the proof of the inequality \eqref{eq:propM} when (i) holds for at least one $i \in [0,A-1]$.%Here we assumed that $K$ is large.

 Assume that (ii) holds for each $i \in [0,A-1]$.
Now, we use the assumption that $M(K)> C_1$ for some sufficiently large constant $C_1$. Denote $\alpha_1 = \frac{\alpha+1}{2}$. Note that  $\alpha_1>\alpha$ and 
$$M_i \leq2 C  M^\alpha_{i+1}\leq M^{\alpha_1}_{i+1},\quad {\text{for}}\quad i=0,...,A-1$$
provided that $C_1$ is sufficiently large. Therefore $M_0\le M_A^{\alpha_1^A}$.
Hence, by  our choice of $A$, $\alpha_1^{A}<2^{-5}$, and we  get
$$M(K)< \sqrt[32]{M(2K)}.$$
It completes the proof of the proposition.
\end{proof}
We continue to prove Theorem (B).
 We claim that the assumption $M(K)>C_1$ in Proposition \ref{pr: M} is fulfilled for $K=[N^{3/4}]$
 when $N$ is sufficiently large (and then for all larger $K$).
 Indeed, $M([\sqrt N]) \geq 2$ and there is at least one cell in $Q_{[\sqrt N]}$ where the function is %smaller than 
$\le 1$. 
 Hence there are neighbors $p, p'$ in $Q_{[\sqrt N]}$ with $$|u(p) - u(p')| \geq \frac{1}{4[\sqrt N]}.$$

 The discrete gradient estimate for harmonic functions, see \cite[Theorem 14]{H} or \eqref{eq:grad} in Appendix, claims that 
 if $q$ and $q'$ are adjacent cells in $Q_R$, then 
$$|u(q)-u(q')| \leq C \max_{Q_{2R}} |u|/ R.$$ 
Hence we have
$$ M([N^{3/4}]) \ge c_2 |u(p) - u(p')| N^{3/4} \geq c_3 N^{1/4} \geq C_1$$
for $N$ large enough.

 Finally we are ready to finish the proof of Theorem (B).
 Let 
$$K_0=[N^{3/4}]+1, K_1=2K_0, \dots, K_l=2^l K_0,$$ where $K_l$ is the largest number in this sequence smaller than $N/2$. Note that $ 2^{l+3} \geq  N^{1/4}$
and hence $32^l > N$ when $N$ is large enough.
 For each $i\in[1,l]$, by Proposition 
\ref{pr: M}, we have
\begin{equation}\label{eq:M} 
M(K_i) \geq \min((M(K_{i-1}))^{32}, e^{c_1K_{i-1}}M(K_{i-1})).\end{equation}
First, we consider the case when for at least one $i \in [1,l]$ 
\begin{equation}\label{eq:M2}
M(K_i) \geq e^{c_1K_{i-1}}M(K_{i-1}).
\end{equation}
Then $M(K_i) \geq e^{c_1K_{i-1}}. $ In this case, applying Proposition \ref{pr: M} for $i+1$ in place of $i$, we get $$M(K_{i+1}) \geq \min(e^{32c_1K_{i-1}},e^{c_1K_{i}})= e^{c_1K_{i}}.$$
Repeating the argument several times, we obtain
 $$\max_{Q_{[N/2]}}|u|\geq M(K_l) \geq e^{c_1K_{l-1}}\geq e^{c_1N/8} .$$
So the proof of Theorem (B) is finished if \eqref{eq:M2} holds for at least one $i\in[1,l]$. 

Assume that it does not happen. Then by  \eqref{eq:M}
 $$M(K_i) \geq (M(K_{i-1}))^{32},\quad i=1,\dots,l.$$
This implies $$\max_{Q_{[N/2]}}|u|\geq M(K_l) \geq (M(K_0))^{32^l} \geq 2^{32^l} \geq 2^N. $$
 The proof of Theorem (B) is completed.

\section{Upper bound}\label{s:A}

\subsection{Remez inequality.}\label{ss:Remez}

 The Remez inequality compares $L^\infty$ norms of a polynomial on different subsets of  real line, we formulate the inequality in a simplified form. The sharp version is proven in the original work \cite{R}, see also \cite{B}.
 \begin{lemma}[Remez inequality]\label{l:Remez}
  Let $p$ be a polynomial of one variable of degree $d$.
  Suppose also that $I$ is a closed interval on the real line and $E$ is a measurable subset of $I$ with positive measure $|E|$. Then
 $$ \max_{ I} |p| \leq \left( \frac{4 |I|}{|E|} \right)^d \sup_{ E} |p|.$$
 \end{lemma}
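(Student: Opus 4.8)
The statement to prove is the Remez inequality for polynomials of one variable.

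The plan is to reduce the general case to the extremal configuration by a classical argument and then estimate directly. First I would observe that the worst case — the one maximizing $\max_I|p|$ for a fixed value of $\sup_E|p|$ — occurs when $E$ is an interval placed at one end of $I$; this is the content of Remez's original theorem, where the extremal polynomial is (a rescaling of) the Chebyshev polynomial $T_d$. For the simplified bound stated here I do not need the sharp extremal analysis: it suffices to cover the easier weak form. So I would set $t=|E|/|I|\in(0,1]$ and aim to show $\max_I|p|\le (4/t)^d\sup_E|p|$.

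The key steps, in order: (1) By affine rescaling, assume $I=[-1,1]$, so $|I|=2$ and $|E|=2t$. (2) Pick the point $x_0\in I$ where $|p|$ attains its maximum on $I$. Since $E$ has measure $2t>0$, one can find $d+1$ points $y_0<y_1<\dots<y_d$ in $E$ that are ``well separated'': a pigeonhole/covering argument shows we may choose them so that consecutive gaps $y_{j+1}-y_j$ are at least comparable to $t/d$, and more importantly so that $|x_0-y_j|$ is not too small compared to the relevant scale. (3) Write $p$ via Lagrange interpolation at $y_0,\dots,y_d$:
\[
p(x_0)=\sum_{j=0}^d p(y_j)\prod_{k\ne j}\frac{x_0-y_k}{y_j-y_k},
\]
so that $|p(x_0)|\le \sup_E|p|\cdot\sum_{j=0}^d\prod_{k\ne j}\frac{|x_0-y_k|}{|y_j-y_k|}$. (4) Bound each factor: the numerators satisfy $|x_0-y_k|\le|I|=2$, while the product of denominators $\prod_{k\ne j}|y_j-y_k|$ is bounded below using the separation from step (2), roughly by $(ct/d)^d d!$ up to combinatorial factors. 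Collecting the $d+1$ terms and the binomial factors produces a bound of the form $(C/t)^d$; tracking the constants (and using $\binom{d}{j}\le 2^d$, $\sum_j 1 = d+1$, etc.) yields the clean constant $4$ in $(4|I|/|E|)^d$.

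I expect the main obstacle to be step (2)–(4): getting the constant down to exactly $4$ rather than some larger absolute constant requires choosing the interpolation nodes more cleverly than naive equispacing inside $E$ — in the truly sharp proof one takes the nodes to be the Chebyshev nodes of the extremal configuration and invokes the monotonicity of the Chebyshev polynomial outside $[-1,1]$. An alternative, cleaner route that avoids delicate node-counting is to prove the extremal property directly: suppose for contradiction that some $p$ with $\sup_E|p|\le 1$ has $\max_I|p|$ larger than the value $\max_I|\widetilde T_d|$ achieved by the suitably normalized Chebyshev polynomial $\widetilde T_d$ adapted to an end-interval of length $|E|$; then $\widetilde T_d$ minus a scalar multiple of $p$ has too many sign changes, contradicting that it is a polynomial of degree $d$. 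Since the paper only needs the stated non-sharp form and explicitly defers to \cite{R} and \cite{B} for the sharp version, I would present the Lagrange-interpolation argument for self-containedness and simply remark that the constant $4$ (and sharper constants) follow from the Chebyshev extremal analysis in those references.
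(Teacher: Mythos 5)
The paper does not actually prove Lemma~\ref{l:Remez}: the authors state the simplified Remez inequality and refer to Remez's original paper \cite{R} and Bojanov's elementary proof \cite{B}, so there is no in-paper proof to compare your argument against; you are proposing a proof where the authors offer a citation.

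Your Lagrange-interpolation route is a legitimate elementary argument, but it does not yield the constant $4$, and your step (4) — ``tracking the constants\dots yields the clean constant $4$'' — is incorrect. Normalizing $I=[-1,1]$ and $|E|=2t$, the pigeonhole choice of nodes $y_0<\cdots<y_d$ in $E$ gives $y_{j+1}-y_j\ge 2t/d$, hence $\prod_{k\ne j}|y_j-y_k|\ge (2t/d)^d\,j!\,(d-j)!$, while the numerators only satisfy $\prod_{k\ne j}|x_0-y_k|\le 2^d$. The Lagrange sum is therefore bounded by
\[
\sum_{j=0}^d \prod_{k\ne j}\frac{|x_0-y_k|}{|y_j-y_k|}
\;\le\; \frac{2^d d^d}{(2t)^d\,d!}\sum_{j=0}^d\binom{d}{j}
\;=\;\frac{2^d d^d}{t^d\,d!}
\;\le\;\Bigl(\frac{2e}{t}\Bigr)^{\!d},
\]
giving $\max_I|p|\le \bigl(2e\,|I|/|E|\bigr)^d\sup_E|p|$, with $2e\approx 5.44$ in place of $4$. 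Your fallback remark that ``cleverer node placement'' closes the gap is also not right: the constant $4$ (and the sharp bound $T_d\bigl(2|I|/|E|-1\bigr)$) comes from the Chebyshev equioscillation argument in \cite{R}, \cite{B}, not from Lagrange interpolation with better-spaced nodes. That said, the discrepancy is harmless for this paper: every use of the Remez inequality here (Corollary~\ref{c:Remez}, Corollary~\ref{cor: remez}, Lemma~\ref{l:3d}, and the Appendix) only requires \emph{some} absolute constant in place of $4$, and $2e$ works just as well after adjusting the other numerical constants. So the clean options are to present the Lagrange bound with the honest constant $2e$ and weaken the statement of the lemma accordingly, or simply to cite \cite{B} for the constant $4$ as the paper does.
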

 We will need a discrete version of the Remez inequality.
 \begin{corollary}\label{c:Remez} 
 Suppose that  a polynomial $p$ of degree less than or equal to $d$ is 
 bounded by $M$ at $d+l$ integer points on a closed interval $I$, then 
 $$  \max_{ I} |p| \leq \left( \frac{4 |I|}{l} \right)^d M.$$

\end{corollary}

 \begin{proof}[Proof of the corollary.]
We may assume that $p$ is not a constant function and therefore $p'$ is a non-zero polynomial of degree $\le d-1$. Let 
$$x_1< x_2< \ldots < x_{d+l}$$ 
be $d+l$ integer points on $I$ where 
$|p(x_j)| \leq M$. If $p'$ has no roots on $(x_{i},x_{i+1})$, then $|p|\leq M$ on $[x_{i},x_{i+1}]$. Since $p'$ has at most $d-1$ roots,   there are at least $l$ intervals
$(x_j, x_{j+1})$ without roots of $p'$. At the end points of such intervals   $p$ is bounded by  $M$
 and therefore $p$ is bounded by $M$ on at least $l$ disjoint  intervals of length at least one.
 So the set $\{|p|\leq M\}$ has length at least $l$ and we can apply the  Remez inequality in Lemma \ref{l:Remez}.

\end{proof}

\subsection{Some notation and a reformulation of Theorem (A)}\label{ss:sl} In this section we change our notation and introduce new coordinates which are better adjusted to our argument for the upper bound.

 We define $ s = (n+m)/2 $ and $ k = (n-m)/2 $, such that $ s,k \in \frac{1}{2} \mathbb{Z} $, and we always have $ s+k \in \mathbb{Z} $. We denote the set of all such pairs $(s,k)$ by 
$$ \mathbb{Z}^2_{\diamondtimes} = \left\{ (s,k) \in \frac{1}{2} \mathbb{Z}\times \frac{1}{2} \mathbb{Z} \, : \, s+k \in \mathbb{Z} \right\}. $$ 

\begin{figure}[h!]
\centering
\includegraphics[scale=1.1]{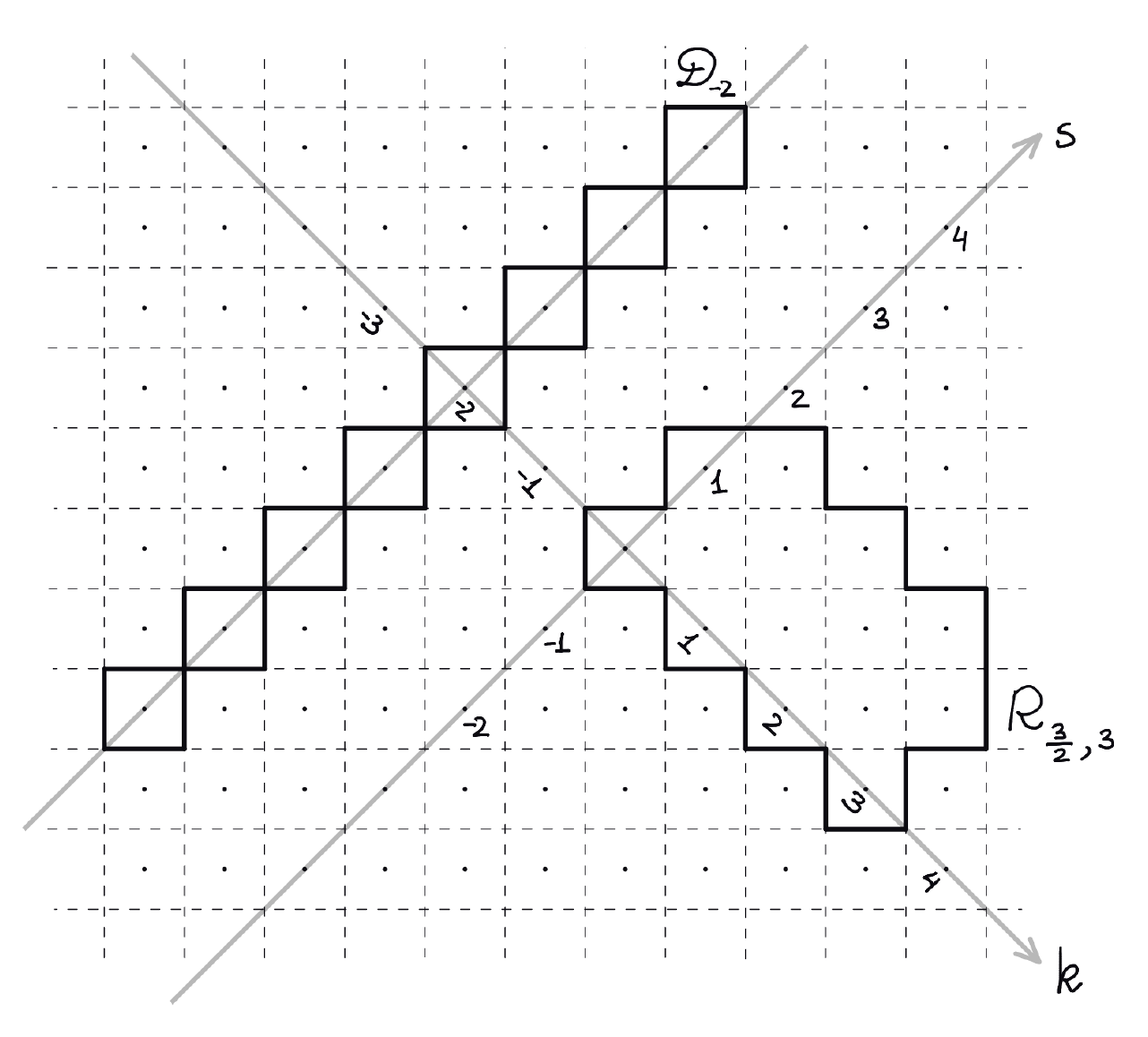}
\caption{\label{fig:intro-notations}} 
$ \mathbb{Z}^2_{\diamondtimes} $, the rectangle $ R_{3/2,2} $, and the line $ D_{-2} $
% To subsection \ref{ss:sl}
\end{figure}

For $l\in\frac 12\mathbb{Z}$, denote by $D_l$ the set
$$D_l=\{ (s,k) \in \mathbb{Z}_{\diamondtimes}^2 : k=l\}.$$
 We define a {\em rectangle} as a subset of $ \mathbb{Z}^2_{\diamondtimes} = \{ (s,k) \} $ of the form $$R=\{ a_1 \leq s \leq a_2, \,\, b_1 \leq k \leq b_2 \}$$ 
%where $ (a_1,b_1) , (a_2, b_2) \in \mathbb{Z}^2_{\diamondtimes} $. 
We denote it by $ R_{I,J} $, where $ I = [a_1,a_2] $  and $ J = [b_1,b_2] $  and write $R_{a,b}$ for $I=[0,a]$ and $J=[0,b]$, see Figure \ref{fig:intro-notations}.

From now on we use new coordinates $(s,k)$. Given a function $u$ on $\mathbb{Z}^2$ we identify it with the function $U$ on $ \mathbb{Z}^2_{\diamondtimes}$ defined by
\[U(s,k)=u(s+k, s-k).\] 
If $u$ is harmonic on $\Z^2$ then $U$ satisfies
\begin{equation}\label{eq:dhsl} 4U\left(s+\tfrac 12,k+\tfrac 12\right)=U(s,k)+U(s+1,k)+U(s,k+1)+U(s+1,k+1).\end{equation}
We reformulate Theorem (A) using these new notation and we do not use other coordinates till the end of the proof of Theorem (A).
We want to prove the following:
\begin{theorem*}[{\bf A$'$}] Suppose that function $U$ is defined on $\mathcal{Q}_N=R_{[-N,N],[-N,N]}$ and satisfies \eqref{eq:dhsl} for all $(s,k)\in\mathbb{Z}_{\diamondtimes}^2$ such that $-N\leq s\leq N-1, -N\leq k\leq N-1$. Assume further that $|U|\le 1$ on $(1-\varepsilon)$ portion of $\mathcal{Q}_N$ and $\varepsilon>0$ is small enough. Then 
\[|U(s,k)|\le e^{a_1(\varepsilon)N},\quad (s,k)\in\mathcal{Q}_{N/2},\]
provided that $N$ is large enough. Moreover $a_1(\varepsilon)\to 0$ as $\varepsilon\to 0$.
\end{theorem*}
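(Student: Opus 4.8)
The plan is to exploit the key algebraic feature of the relation \eqref{eq:dhsl}: in the $(s,k)$ coordinates the discrete harmonicity becomes a ``hyperbolic'' recursion, which propagates data along the lines $D_l$. More precisely, fix a line $D_l$ and think of the values of $U$ on the two adjacent lines $D_l$ and $D_{l+1/2}$; by rearranging \eqref{eq:dhsl}, the restriction $U|_{D_{l+1}}$ is determined by $U|_{D_l}$ and $U|_{D_{l+1/2}}$, and iterating, the values on all the lines $D_{l+j/2}$ are determined by the ``initial data'' on $D_l \cup D_{l+1/2}$. The crucial point is that this propagation is \emph{polynomial}: if one knows $U$ at enough consecutive points on the initial two lines, then on any later line $D_m$ the function $s \mapsto U(s,m)$ agrees with a polynomial in $s$ whose degree grows linearly with $2(m-l)$, and whose size is controlled (with an at-most-exponential loss) by the size of the initial data. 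So the first step is to set up this polynomial propagation carefully and record the degree/size bookkeeping: the degree on $D_m$ is $O(|m-l|)$ and the sup-norm on a controlled $s$-window is at most $e^{O(|m-l|)}$ times the sup-norm of the initial data, with explicit constants that do \emph{not} degenerate as $\varepsilon\to 0$.

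Next I would use the hypothesis that $|U|\le 1$ on a $(1-\varepsilon)$ portion of $\mathcal Q_N$ together with the discrete Remez inequality, Corollary~\ref{c:Remez}. The point is a pigeonhole/Fubini argument: since only an $\varepsilon$-fraction of cells of $\mathcal Q_N$ are ``bad'' (i.e.\ where $|U|>1$), most of the horizontal lines $D_l$ inside $\mathcal Q_N$ — say all but an $O(\sqrt\varepsilon)$ fraction of them — contain at most an $O(\sqrt\varepsilon)$ fraction of bad cells. On such a ``good'' line, among roughly $N$ integer $s$-values we have at least $(1-O(\sqrt\varepsilon))N$ points where the restriction is a polynomial of degree $d = O(N)$ bounded by $1$; if we have arranged that the fraction of good points exceeds the degree (i.e.\ $l := \#\{\text{good points}\} - d \gtrsim N$, which holds once $\varepsilon$ is small relative to the implied constants), Corollary~\ref{c:Remez} gives $\max_I |p| \le (4|I|/l)^d \le e^{O(\varepsilon'N)}$ on the whole $s$-interval $I$ of $\mathcal Q_N$, where $\varepsilon' = \varepsilon'(\varepsilon) \to 0$. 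Thus on each good line $U$ is bounded by $e^{\varepsilon' N}$ on all of $I$, not just on the good points.

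Now I would combine the two ingredients. Pick two good lines $D_{l_0}$ and $D_{l_0+1/2}$ close to the bottom of $\mathcal Q_N$ (possible since good lines are abundant; actually one needs a pair of \emph{adjacent} good lines, which again follows from the $O(\sqrt\varepsilon)$ bound on the fraction of bad lines by a short pigeonhole). On these two lines we know $|U| \le e^{\varepsilon' N}$ on a full $s$-window of length $\asymp N$. Using the polynomial propagation from the first step, this initial data controls $U(s,k)$ for every $(s,k) \in \mathcal Q_{N/2}$: the number of propagation steps from $D_{l_0}$ to any line meeting $\mathcal Q_{N/2}$ is $O(N)$, so we pick up a factor $e^{O(N)}$ — but here I must be careful, because that bare $e^{O(N)}$ factor has a constant independent of $\varepsilon$ and would \emph{not} give $a_1(\varepsilon)\to 0$. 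The fix is to run the propagation not once over distance $\asymp N$ but to interleave it with the Remez step: repeatedly, over short vertical distances $\asymp \varepsilon'' N$, propagate from a good pair of lines to reach another good line, on which Remez re-establishes the bound $e^{O(\varepsilon' \cdot (\text{current window}))}$ and ``resets'' the accumulated exponential growth; choosing the step size as a function of $\varepsilon$ so that each reset costs only $e^{o(1)\cdot N}$ per unit height and there are $O(1/\varepsilon'')$ steps, the total is $e^{a_1(\varepsilon)N}$ with $a_1(\varepsilon)\to 0$. In short, good lines are dense enough that we never have to propagate far before hitting one, and each short propagation followed by a Remez bound contributes only a small exponent.

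The main obstacle is precisely this last bookkeeping: making the exponent $a_1(\varepsilon)$ genuinely tend to $0$. The naive scheme — one Remez bound on a single good line plus one long propagation — fails because propagation over height $\asymp N$ inherently multiplies the bound by $e^{cN}$ with $c$ a fixed constant (it reflects the inherent ``instability'' of the hyperbolic recursion, analogous to how the discrete three-circle theorem in Theorem~\ref{th:three} carries the irremovable error term $Ce^{-cN}M$). Overcoming this requires the iterative interleaving described above, and the delicate part is verifying that the Remez ``gain'' at each good line genuinely dominates the propagation ``loss'' accrued since the previous good line, uniformly, once $\varepsilon$ is small — i.e.\ that one can choose the height increment $h=h(\varepsilon)$ with $h\to\infty$ but $h = o(1/\sqrt\varepsilon)$ (so good lines remain dense at scale $h$) while simultaneously $(\text{propagation rate})\cdot h - (\text{Remez rate})\cdot(\text{window length}) \to -\infty$ appropriately. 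Tracking all constants through this optimization, and handling the geometry near the boundary of $\mathcal Q_{N/2}$ where the available $s$-window shrinks, is where the real work lies.
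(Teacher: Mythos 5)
Your plan has two quantitative errors in the key propagation-and-Remez step, and together they mean the scheme cannot produce $a_1(\varepsilon)\to0$. (a) The restriction $s\mapsto U(s,m)$ to a line $D_m$ coincides with a signed polynomial of degree $O(|m-l|)$ only when $U$ \emph{vanishes} on the two initial lines $D_l,D_{l+1/2}$ (this is the paper's second Observation in Subsection~\ref{ss:obs}); for bounded but nonzero initial data the restriction need not agree with any low-degree polynomial, so Corollary~\ref{c:Remez} cannot be applied directly. To restore the polynomial structure one subtracts a discrete-harmonic extension of the two-line data (Observation~\ref{obs2}, as in Step~1 of Lemma~\ref{l:3d}). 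But that extension needs data on two side \emph{columns} in addition to the two rows (two rows alone do not determine the function under~\eqref{eq:dhsl}), and it comes with the bound $7^{a(R)+b(R)}$, which grows exponentially in the \emph{horizontal width} $a(R)$ of the window, not only in the height $b(R)=|m-l|$. Thus your assertion that propagating a height $|m-l|$ across a window of width $\asymp N$ costs only $e^{O(|m-l|)}$ is false: the loss is $e^{cN}$ with $c$ independent of $\varepsilon$, and iterating over short vertical increments $h$ does not help because the per-step loss is governed by the window width $\asymp N$, not by $h$. (b) Even granting the polynomial structure, your Remez estimate $(4|I|/l)^d\le e^{O(\varepsilon'N)}$ is wrong when $d=O(N)$: since $l\le|I|$ one always has $4|I|/l\ge4$, so $(4|I|/l)^d\ge 4^d=e^{c'N}$ with $c'$ independent of $\varepsilon$; to obtain a Remez factor $e^{o(N)}$ the degree — and hence the vertical distance from the vanishing data — must be $o(N)$.

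Both errors point to the same missing idea: the subtraction and the Remez step must be localized to roughly \emph{square} rectangles of side $K=cN$ with $c$ small. Then $7^{a(R)+b(R)}$ and $(4|I|/l)^{O(b(R))}$ are both $e^{O(cN)}$, and the constraint $\varepsilon\le c^2/10^{20}$ lets $c\to0$ as $\varepsilon\to0$. But making this localization effective requires a mechanism that, near every point of $\mathcal{Q}_{N/2}$, produces a square of side $\asymp cN$ on which $|U|$ is already controlled. That mechanism — good rectangles and their expansion (Lemma~\ref{l:ext}, Corollary~\ref{pr: max-rect}), maximal good squares, and the Vitali-type covering argument (Proposition~\ref{prop:main}) — is where the paper does its real work, and nothing in your proposal plays this role.
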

Theorem (A) follows from Theorem (A$'$). Note that to deduce Theorem (A) we apply Theorem (A$'$) with a different (but comparable) value of $N$ . We cover the initial square $Q_{[N/2]}$ by several shifted new (sloped) squares and apply the statement in each of them.

\subsection{Two elementary observations}\label{ss:obs} Before we start the proof of Theorem (A$'$), we make two useful observations.

Let $I=[a_1,a_2], J=[b_1,b_2]$, where $a_1\le a_2,\  b_1\le b_2$ and  $a_1,a_2,b_1, b_2\in\tfrac{1}{2}\Z$. We consider the rectangle $R_{I,J}$ and denote by $a(R)$ and $b(R)$ its side lengths  $a(R)=a_2-a_1+1/2$ and $b(R)=b_2-b_1+1/2$.
\begin{obs} \label{obs2}
Let $U$ be any function  defined on the set
$$S=\{ (s,k) \in R_{I,J} : \min\{s-a_1,k-b_1\}\in\{0,1/2\}\} $$
 Then $U$ has a unique  discrete harmonic extension to $R=R_{I,J}$. This extension satisfies
$$\max_R |U| \leq 7^{a(R)+b(R)} \max_S |U|.$$
\end{obs}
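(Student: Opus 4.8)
The plan is to prove Observation~\ref{obs2} by building the harmonic extension one ``diagonal'' at a time and tracking how the sup-norm can grow at each step. Recall that the defining relation \eqref{eq:dhsl} expresses the value at the ``center'' $(s+\tfrac12,k+\tfrac12)$ of a unit cell in terms of the four corners. The set $S$ consists of two adjacent bottom rows (those with $k-b_1\in\{0,\tfrac12\}$) together with two adjacent left columns (those with $s-a_1\in\{0,\tfrac12\}$); this is exactly the ``L-shaped'' initial data one needs to run the recursion. The first step is to observe that \eqref{eq:dhsl} can be solved for any one of its four terms in terms of the other three: in particular
\[
U\left(s+1,k+1\right)=4U\left(s+\tfrac12,k+\tfrac12\right)-U(s,k)-U(s+1,k)-U(s,k+1),
\]
so if the three values $U(s,k)$, $U(s+1,k)$, $U(s,k+1)$ are already known and $U(s+\tfrac12,k+\tfrac12)$ is known, then $U(s+1,k+1)$ is determined, with $|U(s+1,k+1)|\le 7\max$ of the four known values. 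Dually, $U(s+\tfrac12,k+\tfrac12)$ is determined by the four corners with the constant $\tfrac14<1$.

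Next I would set up the induction. Order the points of $R=R_{I,J}\setminus S$ by the quantity $s-a_1$ (or equivalently by $k-b_1$ — on the integer sublattice and on the half-integer shift separately). Because $S$ already contains the two leftmost columns and the two bottom rows, every remaining point $p=(s,k)$ with $s-a_1\ge 1$ and $k-b_1\ge 1$ can be written as the ``new'' corner of the cell whose other three corners are $(s-\tfrac12,k-\tfrac12)$-type points already computed at earlier stages, OR one uses the appropriate reflection depending on the parity. Concretely: first extend to all points with $s-a_1\in\{1,\tfrac32\}$ using the two known left columns and two known bottom rows as the three-out-of-four data; then to $s-a_1\in\{2,\tfrac52\}$, and so on. At each such elementary step the new value is a $\pm1,\pm1,\pm1,+4$ (or $\tfrac14$-type) combination of four previously determined values, hence is bounded by $7$ times the current maximum. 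Existence of the extension is immediate from this constructive description, and uniqueness follows because any discrete harmonic function on $R$ is forced, cell by cell in the same order, to take these values.

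Finally, for the norm bound I would count the number of elementary steps needed to reach the farthest point of $R$. Each elementary step advances $\max\{s-a_1,k-b_1\}$ by $\tfrac12$ (processing the integer and half-integer layers in tandem), and the largest value of $s-a_1$ is $a_2-a_1=a(R)-\tfrac12$ while the largest value of $k-b_1$ is $b(R)-\tfrac12$; so the total number of layers one must fill is at most $2(a(R)+b(R))$, and an over-generous count of how many times the factor $7$ is compounded gives $\max_R|U|\le 7^{a(R)+b(R)}\max_S|U|$ (the exponent is deliberately not optimized — any exponential bound suffices for the application). The only mild subtlety, and the step I expect to need the most care, is bookkeeping the parity/shift structure: one must check that every point outside $S$ genuinely appears as the last-determined corner of some cell all of whose other three corners lie strictly earlier in the chosen ordering, so that the recursion never stalls and the constant $7$ (rather than something larger) controls each step. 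Drawing the picture as in Figure~\ref{fig:intro-notations} makes this transparent, but writing it cleanly requires separating the cases according to whether the controlling coordinate is $s-a_1$ or $k-b_1$.
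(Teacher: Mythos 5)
Your overall plan coincides with the paper's: solve for the far corner via the coefficient $4,-1,-1,-1$ recursion, build up the extension inductively from the initial $L$-shape, and control the growth by the factor $7=4+1+1+1$. The existence and uniqueness parts of your argument are fine. The layering you choose, however, is where the argument breaks down. The claim that each elementary step advances $\max\{s-a_1,k-b_1\}$ by $\tfrac12$ is false: when you determine $U(s+1,k+1)$, the recursion also involves $U(s+1,k)$ and $U(s,k+1)$, and whichever of $s-a_1$, $k-b_1$ is the larger, the corresponding one of these two neighbours has the \emph{same} value of $\max\{s-a_1,k-b_1\}$ as the new point. So one layer does not determine the next, the estimate ``$7$ times the max of the previous layer'' is not available, and once you add the inner induction needed to fill a layer, the naive count deteriorates badly. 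Independently, even taking your count at face value, compounding a factor of $7$ over $2(a(R)+b(R))$ layers gives $7^{2(a(R)+b(R))}$, not $7^{a(R)+b(R)}$.

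The fix, which is what the paper does, is to track a \emph{per-point} estimate indexed by the anti-diagonal level $s+k$, namely $|U(s,k)|\le 7^{\,s+k-a_1-b_1}\max_S|U|$. In the recursion the new point sits at level $s+k+1$ while all four points used to compute it sit at levels at most $s+k$; since the sum of the absolute coefficients is exactly $7$, the estimate propagates from each level to the next. (One still sweeps row by row, left to right, so that the value in the same row to the left is already available, but it is the monotonicity in $s+k$ that makes the constant come out right.) As the largest value of $s+k-a_1-b_1$ on $R$ is $(a_2-a_1)+(b_2-b_1)=a(R)+b(R)-1$, this yields $\max_R|U|\le 7^{a(R)+b(R)-1}\max_S|U|$, which is the stated bound with room to spare.
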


\begin{figure}[h!]
\centering
\includegraphics[scale=1.1]{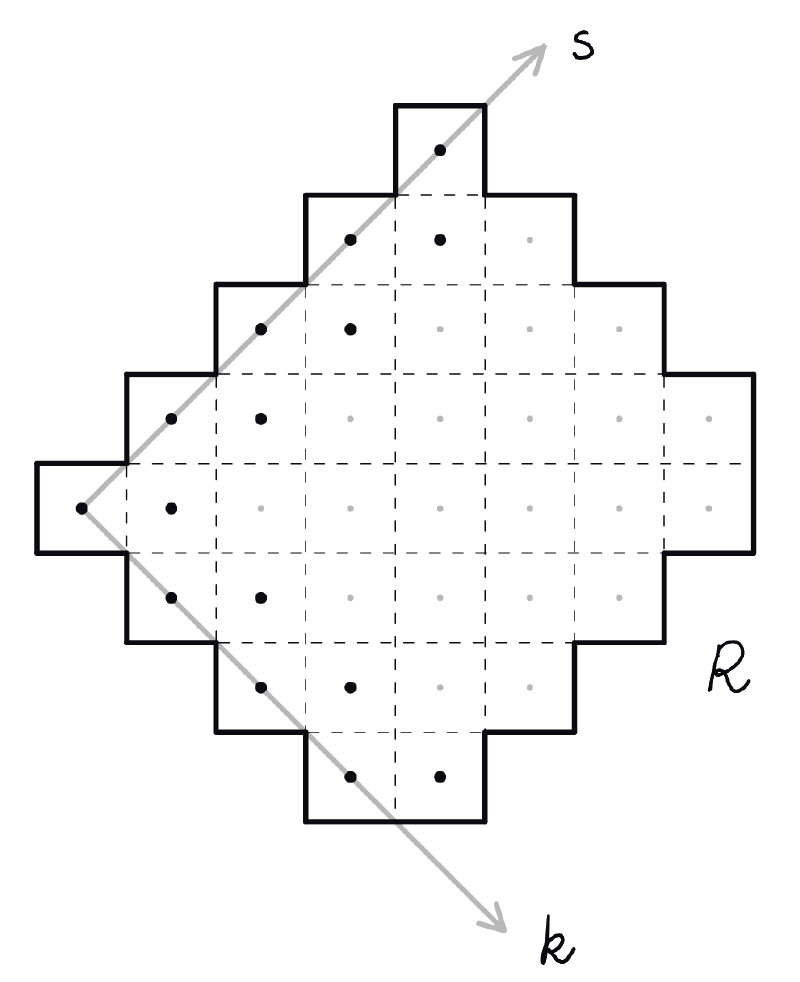}
\caption{\label{fig:obs-2}} 
The square $ R $ and its subset $ S $ (black points)
% To observation \ref{obs2}
\end{figure}

\begin{proof}
 Without loss of generality we may assume $\max_S |U|=1$.
 We are going to prove that the extension is unique and satisfies 
\begin{equation}\label{eq:7}
  |U(s,k)| \leq 7^{s+k-a_1-b_1} 
\quad{\text{for\ all}}\ (s,k)\in R.
\end{equation} 
We argue by induction on $k$ and for fixed $k$ by induction on $s$. Recall that $D_l=\{(s,k) \in\mathbb{Z}^2_{\diamondtimes}  , k =l \}$ and let $T_l=D_l\cap R$. Clearly on $T_{b_1}$ and $T_{b_1+1/2}$  the function is already defined and 
the inequality \eqref{eq:7} holds.
 
Suppose that we have proved that $U$ is uniquely determined and satisfies the estimate on  $R_{k} := \mathop\cup_{b_1\le i\le k} T_i $. The value of $U(s,k+1/2)$ is prescribed at the first point of $T_{k+1/2}$,  
where $s=a_1$ or $s=a_1+1/2$ (depending on the value of $k$), and the inequality \eqref{eq:7} holds.
 Furthermore, the extension has to satisfy the mean value property \eqref{eq:dhsl} for the cells $(s,k)$ on $T_k$ with $a_1<s<a_2$,
\begin{equation*}
U(s+\tfrac 1 2,k+\tfrac 1 2) =4U(s,k)-U(s+\tfrac 1 2,k- \tfrac 1 2)- U(s-\tfrac 1 2,k-\tfrac 1 2)- U(s-\tfrac 1 2,k+\tfrac 1 2).
\end{equation*}
 Thus by induction on $s$ the values on $T_{k+1/2}$ are uniquely determined.

Furthermore, 
 since estimate \eqref{eq:7} holds on $T_k$ and $T_{k-1/2}$,
 we have 
$$| U(s+\tfrac 1 2,k+\tfrac 1 2)| \leq 6\cdot7^{s+k-a_1-b_1} + |U(s-\tfrac 1 2,k+\tfrac 1 2)|,\quad a_1<s<a_2.$$ 
So if \eqref{eq:7} holds at $(s-\tfrac 1 2,k+\tfrac 1 2)$, then it also holds at $(s+\tfrac 1 2,k+\tfrac 1 2)$. The induction argument finishes the proof. 
\end{proof}

\begin{obs}
Assume that 
a discrete harmonic function $U$ on $R=R_{I,J}$ satisfies $$ U(s,b_1)=U(s,b_1+\tfrac12) =0 , \quad a_1\leq s \leq  a_2.$$ Then 
for any $k\in[b_1+1,b_2] \cap \frac{1}{2}\mathbb{Z}$ there is a polynomial $p_k$ of degree not greater than $2(k-b_1)-2$ such that 
 \begin{equation}\label{eq:poly}
 U(s,k)= (-1)^{s+k}p_k(s), \quad a_1\leq s \leq  a_2.
\end{equation} 
\end{obs}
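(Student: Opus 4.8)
The plan is to pass to the ``alternating'' function $W(s,k):=(-1)^{s+k}U(s,k)$, which is well defined since $s+k\in\Z$ on $\Z^2_\diamondtimes$, and to prove by induction on $k$ (advancing in steps of $\tfrac12$) that the restriction of $W(\cdot,k)$ to $D_k\cap R$ is the restriction of a genuine polynomial in $s$ --- identically zero for $k\in\{b_1,b_1+\tfrac12\}$, and of degree at most $2(k-b_1)-2$ for $k\ge b_1+1$. Rewriting this back in terms of $U$ is exactly the assertion \eqref{eq:poly}. The two bottom lines $D_{b_1}$ and $D_{b_1+\frac12}$, on which $U\equiv 0$ by hypothesis, serve as the base of the induction.

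For the inductive step I would rewrite \eqref{eq:dhsl} with $k$ replaced by $\kappa-\tfrac12$, so that it expresses the line $D_{\kappa+1/2}$ through the two lines below it:
\[
U(s,\kappa+\tfrac12)+U(s+1,\kappa+\tfrac12)=4U(s+\tfrac12,\kappa)-U(s,\kappa-\tfrac12)-U(s+1,\kappa-\tfrac12),
\]
valid for every $s$ whose $2\times 2$ block of cells lies in $R$. Substituting $U=(-1)^{s+k}W$, all the signs combine into one common factor $(-1)^{s+\kappa+1/2}$ which cancels, leaving
\[
W(s,\kappa+\tfrac12)-W(s+1,\kappa+\tfrac12)=4W(s+\tfrac12,\kappa)+W(s,\kappa-\tfrac12)-W(s+1,\kappa-\tfrac12).
\]
By the induction hypothesis the right-hand side is a polynomial in $s$: the term $4W(s+\tfrac12,\kappa)$ has degree at most $2(\kappa-b_1)-2$, while the last two terms form a first difference of a polynomial of degree at most $2(\kappa-\tfrac12-b_1)-2$ and hence have degree at most $2(\kappa-b_1)-4$; so the whole right-hand side has degree at most $2(\kappa-b_1)-2$.

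To complete the step I would invoke the elementary fact that discrete antidifferentiation raises the degree by exactly one: if $h$ is a polynomial of degree $\le D$, there is a polynomial $P$ of degree $\le D+1$ with $P(s)-P(s+1)=h(s)$ identically, unique up to an additive constant. Taking $h$ to be the right-hand side above ($D=2(\kappa-b_1)-2$), the function $W(\cdot,\kappa+\tfrac12)-P$ has vanishing consecutive differences along $D_{\kappa+1/2}\cap R$, hence is constant there; therefore $W(\cdot,\kappa+\tfrac12)$ agrees on $D_{\kappa+1/2}\cap R$ with the polynomial $P+\mathrm{const}$ of degree $\le D+1=2(\kappa+\tfrac12-b_1)-2$, which is the assertion for the line $k=\kappa+\tfrac12$. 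For the first line $\kappa+\tfrac12=b_1+1$ the right-hand side is identically zero, so $W(\cdot,b_1+1)$ is constant, matching the bound $2(k-b_1)-2=0$.

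The argument is essentially routine; the only points that need care --- and hence the main ``obstacle'' --- are the bookkeeping of degrees through the recursion (keeping precise track of the $-2$ in $2(k-b_1)-2$, and of the one-unit degree drop caused by the first difference of the lower line), together with checking that the parity signs cancel exactly after the substitution $U=(-1)^{s+k}W$. One should also note that the statement is trivial when $D_k\cap R$ has fewer than $2(k-b_1)$ points, since then any function on it is the restriction of a polynomial of degree $\le 2(k-b_1)-2$ by interpolation; in all other cases the ``vanishing differences $\Rightarrow$ polynomial'' step above is the operative one.
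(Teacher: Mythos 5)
Your proof is correct and follows essentially the same route as the paper: passing to the alternating function $W(s,k)=(-1)^{s+k}U(s,k)$ is exactly what the paper encodes implicitly by defining $p_k$ via \eqref{eq:poly}, and your recurrence is a sign-adjusted rewrite of the paper's identity $p_{k}(s+1)-p_{k}(s)=-4p_{k-1/2}(s+\tfrac12)+p_{k-1}(s+1)-p_{k-1}(s)$, with the same induction on two previous half-lines, the same degree bookkeeping, and the same antidifferentiation step to raise the degree by one.
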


\begin{proof} Define the functions $p_k(s)$ by \eqref{eq:poly}. We show by induction on $k$ that $p_k$ coincides with some polynomial of degree $\le 2(k-b_1)-2$.
The basis of induction  follows from the fact that $U(s,k)=0$  for $k=b_1,b_1+1/2$ (by a polynomial of a negative degree we mean identically zero function). 
We prove the statement for $k$ assuming that it holds for $k-1/2$ and $k-1$. By the mean value property \eqref{eq:dhsl}
 for $U$, we have  
$$ p_{k}(s+1)-p_{k}(s)=-4p_{k-1/2}(s+\tfrac12)+p_{k-1}(s+1)-p_{k-1}(s),\quad a_1\leq s\leq a_2-1.$$
Then, by the induction assumption, $p_{k}(s+1)-p_{k}(s)$ coincides on $a_1\le s\le a_2-1$ with a polynomial of degree $\le 2(k-b_1)-3$. Thus 
$p_k(s)$ coincides on $a_1\le s\le a_2$ with some polynomial of degree not greater than $2(k-b_1)-2$.
\end{proof}
The next corollary is an application of the Remez inequality.
\begin{corollary} \label{cor: remez}
Assume that the rectangle $R=R_{I,J}$ satisfies $ a(R) \geq 10b(R) $. Assume also that 
a discrete harmonic function $U$ on $R$ satisfies 
$$ U(s,b_1)=U(s,b_1+\tfrac12) =0 , \quad  a_1\leq s \leq  a_2,$$
and 
$ |U(s,b_2)| \leq M$
for at least half of the points $(s,b_2)$ on $T_{b_2}=D_{b_2}\cap R$.
Then   the following inequality holds: $$ |U(s,b_2)| \leq M C^{b(R)} , \quad a_1 \leq s \leq a_2. $$
\end{corollary}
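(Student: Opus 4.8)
The plan is to combine the polynomial representation from the preceding observation with the discrete Remez inequality (Corollary~\ref{c:Remez}). Since $U$ is discrete harmonic on $R$ and vanishes on $T_{b_1}$ and $T_{b_1+1/2}$, that observation produces a polynomial $p_{b_2}$ with
\[
U(s,b_2)=(-1)^{s+b_2}p_{b_2}(s),\qquad a_1\le s\le a_2,
\]
and $\deg p_{b_2}\le 2(b_2-b_1)-2=2b(R)-3$. If $b_2-b_1<1$ then $U$ vanishes identically on $T_{b_2}$ and there is nothing to prove, so we may assume $b(R)\ge 3/2$; in particular $d:=\deg p_{b_2}\le 2b(R)$. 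The desired estimate $|U(s,b_2)|\le C^{b(R)}M$ on $T_{b_2}$ is then equivalent to $\max_{s\in I}|p_{b_2}(s)|\le C^{b(R)}M$.

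Next I would count the sample points. The set $T_{b_2}$ is an arithmetic progression of step $1$ inside $[a_1,a_2]$, so a short check on the parities of $a_1,a_2\in\tfrac{1}{2}\Z$ gives $|T_{b_2}|\ge a_2-a_1=a(R)-\tfrac{1}{2}$, and by hypothesis at least half of these points, hence at least $\tfrac{1}{2}\bigl(a(R)-\tfrac{1}{2}\bigr)$ of them, satisfy $|p_{b_2}(s)|\le M$. An integer translation $s\mapsto s-\theta$ with $\theta\in\{0,\tfrac{1}{2}\}$ turns these into genuine integer points without changing the degree of the polynomial or the length $|I|=a(R)-\tfrac{1}{2}$, so Corollary~\ref{c:Remez} applies. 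Writing the number of good points as $d+l$, the ratio hypothesis $a(R)\ge 10b(R)$ yields
\[
l\ \ge\ \tfrac{1}{2}\bigl(a(R)-\tfrac{1}{2}\bigr)-(2b(R)-3)\ \ge\ \tfrac{3}{10}\,a(R),
\]
so that $4|I|/l<40/3$. Plugging this into Corollary~\ref{c:Remez} would then give
\[
\max_{s\in I}|p_{b_2}(s)|\ \le\ \Bigl(\tfrac{40}{3}\Bigr)^{d}M\ \le\ \Bigl(\tfrac{40}{3}\Bigr)^{2b(R)}M\ <\ 178^{b(R)}M,
\]
which is the claim with, say, $C=178$.

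The computation is essentially routine once the polynomial representation is in hand; the only mildly delicate points are keeping the count $|T_{b_2}|$ honest for the various parities of the endpoints $a_1,a_2\in\tfrac{1}{2}\Z$, and the reduction to integer sample points before invoking Corollary~\ref{c:Remez}. For the latter one may alternatively observe that the proof of Corollary~\ref{c:Remez} uses only that consecutive sample points lie at distance $1$ apart, hence applies verbatim to any progression of step $1$. Neither point is a genuine obstacle.
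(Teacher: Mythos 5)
Your proof is correct and follows the same route as the paper: apply the polynomial-representation observation to write $(-1)^{s+b_2}U(s,b_2)$ as a polynomial of degree at most $2b(R)-3$, count the points of $T_{b_2}$, and invoke the discrete Remez inequality (Corollary~\ref{c:Remez}) using the ratio hypothesis $a(R)\ge 10b(R)$. You simply spell out the arithmetic the paper leaves implicit, producing an explicit constant $C=178$, and you sensibly dispose of the degenerate case $b_2-b_1<1$ and the half-integer-translate issue, both of which the paper passes over silently.
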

\begin{proof}
Indeed, by the observation $(-1)^{s+b_2}U(s,b_2)$ coincides with  a polynomial $p(s)$ of degree not greater than $2b(R)-3$. But the number of cells in $D_{b_2}\cap R$  is at least $a(R)-1 \geq 5(2b(R)-3)$ and $|p|\leq M$ on at least half of those cells. Applying the discrete version of the Remez inequality for $p$ we obtain a bound for $|p|$ on the interval $[a_1,a_2]$. It gives the required bound for $|u|$ on $T_{b_2}$.
\end{proof}

\subsection{Auxiliary Lemma} We will use the following lemma several times in the proof of Theorem (A).

\begin{lemma} \label{l:3d}
Let $U$ be a discrete harmonic function on a rectangle $R=R_{I,J}$ with $ a(R) \geq 10b(R).$ If 
$$ |U(s,b_1)|, |U(s,b_1+\tfrac12)| \leq M, \quad a_1 \leq s \leq a_2,$$
 and  $|U|\le M$ on at least half of the cells of   $T_{b_2}=D_{b_2}\cap R$, then 
$$ |U(s,k)| \leq M C_1^{a(R)} , \quad {\text{ for\ all}}\quad  (s,k) \in R .$$
\end{lemma}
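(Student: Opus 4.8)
The idea is to peel off the influence of the two initial rows $T_{b_1}$, $T_{b_1+1/2}$ by splitting $U$ into a part with zero initial data and a part carrying the initial data, then apply Corollary \ref{cor: remez} and Observation \ref{obs2} respectively. Concretely, write $U = V + W$, where $V$ and $W$ are the discrete harmonic functions on $R$ with $V(s,b_1)=U(s,b_1)$, $V(s,b_1+\tfrac12)=U(s,b_1+\tfrac12)$ and $W(s,b_1)=W(s,b_1+\tfrac12)=0$ on $T_{b_1}\cup T_{b_1+1/2}$, while on $T_{b_2}$ we want $W$ to pick up whatever $U$ does there. The cleanest way to arrange this (recalling that, by the diagonal nature of the recurrence \eqref{eq:dhsl}, a discrete harmonic function on the rectangle is uniquely determined by its values on $T_{b_1}$, $T_{b_1+1/2}$ together with one value on each successive diagonal near the left edge — this is the content of Observation \ref{obs2}) is: let $V$ be the harmonic extension of the data on $T_{b_1}\cup T_{b_1+1/2}$ from the set $S$ of Observation \ref{obs2} obtained by also setting $V$ equal to $0$ at the left-edge points on each diagonal $T_k$, $b_1+1<k\le b_2$; then set $W=U-V$. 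By construction $W$ is harmonic, vanishes on $T_{b_1}\cup T_{b_1+1/2}$, so Observation 2.13 (the polynomial observation) and Corollary \ref{cor: remez} apply to $W$ once we control $|W|$ on half of $T_{b_2}$.

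First I would bound $V$. By Observation \ref{obs2} applied to the rectangle $R$ (whose defining set $S$ contains exactly the data we prescribed for $V$), we get $\max_R|V|\le 7^{a(R)+b(R)}M$, since the prescribed values of $V$ are bounded by $M$ (the data on the two initial rows by hypothesis, the left-edge values are $0$). In particular $|V(s,b_2)|\le 7^{a(R)+b(R)}M$ for all $s$. Second, I estimate $W$ on $T_{b_2}$: since $|U|\le M$ on at least half of $T_{b_2}$ and $|V|\le 7^{a(R)+b(R)}M$ everywhere, we get $|W|=|U-V|\le 2\cdot 7^{a(R)+b(R)}M$ on at least half of the cells of $T_{b_2}$. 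Now $W$ is harmonic on $R$, vanishes on $T_{b_1}$ and $T_{b_1+1/2}$, and $a(R)\ge 10 b(R)$, so Corollary \ref{cor: remez} gives $|W(s,b_2)|\le 2\cdot 7^{a(R)+b(R)}M\cdot C^{b(R)}$ for all $s$ on $T_{b_2}$.

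At this point I have good bounds on $T_{b_1}$, $T_{b_1+1/2}$ (namely $M$, from the hypothesis) and on $T_{b_2}$ (namely $C'^{a(R)}M$, absorbing $7^{a(R)+b(R)}C^{b(R)}$ into a single constant to the power $a(R)$, using $b(R)\le a(R)$). To propagate a bound to \emph{all} of $R$, I would run Observation \ref{obs2} once more, but now with roles of the $s$-direction and $k$-direction interchanged: view $R$ with $T_{b_1}$, $T_{b_1+1/2}$ and $T_{b_2}$, $T_{b_2-1/2}$ as ``boundary rows''. Actually the simplest route is to apply Observation \ref{obs2} directly to $U$ on $R$: its defining set $S$ for $R$ consists of two initial rows plus one left-edge value per diagonal; we already bound $|U|$ by $M$ on the two initial rows, and we just need to bound $|U|$ at the remaining left-edge points $(a_1,k)$ or $(a_1+1/2,k)$ for $b_1+1\le k\le b_2$. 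These are controlled by interpolating between the $T_{b_1}$-bound and the $T_{b_2}$-bound along the left edge via the polynomial structure: on the ``column'' $s=a_1$ (or $s=a_1+1/2$), shrink to a thin sub-rectangle $R'$ with $a(R')$ comparable to $b(R)$ and rerun the $V,W$ decomposition there, or — more directly — apply a one-dimensional Remez/three-points estimate in the $k$-variable, which is legitimate because fixing $s$ and using \eqref{eq:dhsl} expresses $U$ along a column in terms of the two adjacent columns. The technically cleanest implementation is: having bounded $U$ on $T_{b_1}\cup T_{b_1+1/2}\cup T_{b_2}$ by $C'^{a(R)}M$, apply Observation \ref{obs2} to $U$ on the \emph{rotated} rectangle (swap the two coordinate directions) to conclude $\max_R|U|\le 7^{a(R)+b(R)}\cdot C'^{a(R)}M\le C_1^{a(R)}M$.

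\textbf{Main obstacle.} The delicate point is the bookkeeping in the $V/W$ splitting: Observation \ref{obs2} requires prescribing values on a very specific ``staircase'' set $S$ (two full initial rows, plus a single extra point on each subsequent diagonal), so one must check that the decomposition $U=V+W$ respects this structure — i.e.\ that $W$ genuinely vanishes on $T_{b_1}\cup T_{b_1+1/2}$ so that the polynomial observation and Corollary \ref{cor: remez} apply to it. A second, softer obstacle is the final propagation step: one must verify that $a(R)\ge 10 b(R)$ is still the relevant hypothesis after the coordinate swap (it is the long side $a(R)$ that appears in the exponent, and $b(R)\le a(R)$ keeps all constants of the form $C_1^{a(R)}$), and that the constants from the several applications of Observation \ref{obs2}, Corollary \ref{cor: remez} and the Remez inequality combine into a single $C_1^{a(R)}$. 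None of this is hard, but the exponents must be tracked carefully so that the $b(R)$-power terms never dominate.
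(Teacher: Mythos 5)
Your Step 1 — the decomposition $U=V+W$ where $V$ is the harmonic extension from Observation \ref{obs2} of the two initial rows (with zero left-edge data), so that $W=U-V$ vanishes on $T_{b_1}\cup T_{b_1+1/2}$, allowing you to bound $V$ by Observation \ref{obs2} and $W$ by Corollary \ref{cor: remez} — is exactly the paper's Step~1, and it correctly yields the bound $|U|\le MC_2^{a(R)}$ on the full row $T_{b_2}$.

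The second half of your argument has a genuine gap, and you already sense it: you now know $U$ on three \emph{rows} ($T_{b_1}$, $T_{b_1+1/2}$, $T_{b_2}$), but Observation \ref{obs2}, applied either directly or to the rotated rectangle, requires data along the left edge (the columns $s=a_1$ and $s=a_1+1/2$, equivalently one extra point per diagonal $T_k$), which you do not control. Your two proposed remedies both fail. The ``polynomial/Remez interpolation in the $k$-variable along the column'' does not exist: the polynomial structure of Observation \ref{obs2}'s companion (that $(-1)^{s+k}U(s,k)$ is a polynomial in $s$ of degree $\le 2(k-b_1)-2$) is a consequence of $U$ vanishing on the two initial \emph{rows} and holds only row-by-row in $s$; fixing $s$ and varying $k$ gives no polynomial structure and no Remez inequality, so there is nothing to interpolate. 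The ``rotated Observation \ref{obs2}'' likewise requires the two initial columns $s=a_1,a_1+\tfrac12$ of $R$ as input data and, moreover, makes no use of the $T_{b_2}$ bound you worked to obtain; it simply has insufficient data.

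The paper closes this gap with a genuinely different idea that your outline is missing. Given bounds $|U|\le M_1$ on the three rows $T_{b_1}$, $T_{b_1+1/2}$, $T_{b_2}$, it proves $|U|\le M_1 9^{b(R)}$ on the \emph{inner} region $a_1+b(R)\le s\le a_2-b(R)$ by induction on $b(R)$: one introduces the horizontally averaged function $V(s,k)=U(s,k)+U(s+1,k)$, which is discrete harmonic on a rectangle of $b$-extent reduced by $1/2$, checks that $|V|\le 6M_1$ on its three boundary rows $D_{b_1+1/2}$, $D_{b_1+1}$, $D_{b_2}$ (the middle one via one application of \eqref{eq:dhsl}), invokes the induction hypothesis for $V$, and then reads off $U$ from $4U(s,k)=V(s-\tfrac12,k-\tfrac12)+V(s-\tfrac12,k+\tfrac12)$. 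This only controls the inner strip, so a final step applies Observation \ref{obs2} to the two thin boundary rectangles $R_{[a_1,a_1+b(R)+1/2],J}$ and $R_{[a_2-b(R)-1/2,a_2],J}$, whose required edge data now \emph{is} available from the inner-strip bound and the two initial rows. Without some version of this averaging/induction device, passing from control on three rows to control on the whole rectangle is exactly the hard part of the lemma, and your sketch does not supply it.
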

\begin{proof}
We divide the proof into several steps.

{\it Step 1.} First, we prove the estimate $|U|\le MC_2^{a(R)}$ on $T_{b_2}$. It is enough to consider the case when $U$ is zero on the set $S_0=\{(s,k)\in R, k=b_1,b_1+1/2\}$. Indeed, we can apply Observation \ref{obs2} to find a discrete harmonic function $U_1$ in $R$, which coincides with $U$ on the set $S_0$ and for example is zero at $(s,k)\in R$ such that $s\in\{a_1,a_1+1/2\}$ and $k>b_1+1/2$. We see also that $|U_1|\le 7^{a(R)+b(R)}M\le M7^{2a(R)}$ in $R$.
Now, consider the function $U_2 = U-U_1$, which  is equal to zero on $S_0$ and is less than $(1+7^{2a(R)})M$ on at least half of the cells of $T_{b_2}$. Corollary \ref{cor: remez}, applied for $U_2$, yields the bound $|U_2|\leq C^{b(R)} (1+7^{2a(R)})M$ on $T_{b_2}$. Thus 
$|U|$ is bounded on $T_{b_2}$  by $(C^{b(R)} (1+7^{2a(R)})+7^{2a(R)})M\leq MC_2^{a(R)}$.
%\end{proof}

%\begin{lemma} \label{l:3d}
%Let $U$ be a discrete harmonic function on  $R=R_{a,b}$  with $ a \geq 10b.$ Suppose that 
{\it Step 2.} Suppose that $U$ is discrete harmonic in $R=R_{I,J}$ with $a(R)\ge 10b(R)$ and that
\begin{equation} \label{eq:base}
|U(s,b_1)|, |U(s,b_1+1/2)|, |U(s,b_2)| \leq M_1\quad{\text{for\ any}}\quad  a_1 \leq s \leq a_2.
\end{equation}
Then, we prove by induction on $b(R)$, that 
\begin{equation} \label{eq:3d}
|U(s,k)| \leq M_1 9^{b(R)} \quad {\text{for}}\  
(s,k) \in \mathbb{Z}_{\diamondtimes}^2,\   a_1+b(R) \leq s \leq a_2-b(R) ,\  b_1 \leq k \leq b_2.
\end{equation}

 If $b(R) \leq 3/2$ all the values of $|U|$ are bounded by $M_1$. This is the basis of induction.
 For the induction step assume $b(R) > 3/2$. 

  Define the function $V$ on $R^-=\{(s,k)\in\mathbb{Z}^2_{\diamondtimes}: a_1\le s\le a_2-1, b_1+1/2\le k\le b_2\}$ by 
$$V(s,k)=U(s,k)+ U(s+1,k).$$
 Note that $V$ is  discrete harmonic in $R^-$ and clearly $|V|\leq 2M_1$ on $D_{b_1+1/2}\cap R^{-}$ and on $D_{b_2}\cap R^{-}$.
 We claim that $|V| \leq 6M_1$ on $D_{b_1+1}\cap R^{-}$. By the mean value property \eqref{eq:dhsl}, 
 $$V(s,b_1+1)= U(s,b_1+1)+ U(s+1,b_1+1) = 4U(s+1/2,b_1+1/2) - U(s,b_1)- U(s+1,b_1).$$
 Since $|U|\leq  M_1$ on $D_{b_1} \cap R $ and on $D_{b_1+1/2} \cap R $, the identity above implies
 $|V| \leq 6M_1$ on  $D_{b_1+1} \cap R^{-}$.

  Now, we know that $|V| \leq 6M_1$ on three lines:  $D_{b_1+1/2}\cap R^{-} , D_{b_1+1} \cap R^{-}$ and 
 $D_{b_2} \cap R^{-}$. We are in position to apply the induction assumption for  $V$ and  $R^-$. It gives
 $$ |V(s,k)| \leq 6M_1 9^{b(R^-)} ,\quad  a_1+b(R^-) \leq s \leq a_2-1-b(R^-),\ b_1+1/2\leq k \leq b_2.$$
Applying \eqref{eq:dhsl} once again, we get 
$$4U(s,k)= V(s-1/2,k-1/2) + V(s-1/2,k+1/2).$$
 For every $(s,k) \in \mathbb{Z}_{\diamondtimes}^2$,  $a_1+ b(R) \leq s \leq a_2-b(R) $, $ b_1+1 \leq k \leq b_2-1/2,$ it yields $$|U(s,k)| \leq M_1 9^{b(R)}.$$
 While on $D_{b_1} \cap R$, $D_{b_1+1/2}\cap R$,  and $D_{b_2} \cap R$ the function $|U|$ is smaller than $M_1$ by the initial assumption.  The induction step is completed.
We therefore have proved  \eqref{eq:3d}. 

{\it Step 3} Finally, applying Observation \ref{obs2} to rectangles $R_{[a_1,a_1+b(R)+1/2], J}$ and $R_{[a_2-b(R)-1/2,a_2],J}$,
  we obtain 
\[|U(s,k)|\le M_1 9^{b(R)}7^{b(R)+1}\le M_1C_3^{b(R)}, {\text{ for\ all}}\quad  (s,k) \in R_{I,J}.\]

Now we combine the steps. The first step implies \eqref{eq:base} with $M_1=MC_2^{a(R)}$, then Steps 2 and 3 give  $|U(s,k)|\le M_1C_3^{b(R)}\le MC_2^{a(R)}C_3^{b(R)}\le MC_1^{a(R)}$ for all $(s,k) \in R$. 
This completes the proof of Lemma \ref{l:3d}.
\end{proof}

\subsection{Good rectangles}
We make the last preparation for the proof of Theorem (A). Let $\mathcal{Q}=\mathcal{Q}_N=R_{[-N,N],[-N,N]}$ as in Section \ref{ss:sl}. We fix a function  $ U : \mathcal{Q} \rightarrow \mathbb{R} $ which is  discrete harmonic and such that $ |U| \leq 1 $ on $(1-\varepsilon)$ portion of $\mathcal{Q}$.
 
We consider  "good  rectangles'' $R_{[a_1,a_2],[b_1,b_2]}=\{(s,k)\in\mathbb{Z}^2_{\diamondtimes}: a_1\le s\le a_2, b_1\le k\le b_2\}$, whose side-lengths $a(R)=a_2-a_1+1/2$ and $b(R)=b_2-b_1+1/2$ are comparable, and on which the function $U$ is not  large.  More precise definition is below. 
\begin{definition} Let  $A=C_1^{800}$, where $C_1$ is the constant from Lemma \ref{l:3d}.
 A rectangle $ R\subset \mathcal{Q} $ is called {\em good}, if   \\
\[ a(R)/10 \leq  b(R) \leq 10 a(R)\quad{\text{and}}\quad  
\max_R|U| \leq A^{a(R)+b(R)}.\]
\end{definition}

The following lemma helps to expand good rectangles. It claims that if there is a good rectangle and near this rectangle the portion of cells with $|U|>1$ is small, then one can find a new larger good rectangle that contains the old one. For simplicity we formulate it for rectangles $R_{I,J}$ with $I=[0,a]$ and $J=[0,b]$ but will apply it for general rectangles, the proof is the same up to small changes of notation.

\begin{lemma}  \label{l:ext}
 Assume that  $R_{a,b}$  is a good  rectangle with $a\geq b \geq 1/2$ and that the number of cells in $R_{a,3b}$ where  $|U|>1$ 
 is less than $\frac{1}{10^5}|R_{a,b}|$. Then  for  any $b' \in [3b/2, 2b]$, the rectangle $R_{a,b'}$ is also good.
   \end{lemma}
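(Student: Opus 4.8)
The plan is to deduce goodness of $R_{a,b'}$ from a two-sided bound on $\max_{R_{a,b'}}|U|$. Since $a(R_{a,b'}) = a+1/2$ and $b(R_{a,b'}) = b'+1/2$ with $b' \in [3b/2, 2b]$ and $a \ge b$, the aspect-ratio condition $a(R)/10 \le b(R) \le 10 a(R)$ is automatic (one only needs $a \ge b$ to ensure $b' \le 2b \le 2a \le 10(a+1/2)$, and $b' \ge 3b/2 \ge 3/4 \cdot (a(R_{a,b})/10)\cdots$ — more carefully, since $R_{a,b}$ is good we have $a \le 10b$, so $a+1/2 \le 10b+1/2 \le 10(3b/2) \le 10 b'$). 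So the whole content is the sup-bound $\max_{R_{a,b'}}|U| \le A^{(a+1/2)+(b'+1/2)}$.

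The main step is to control $|U|$ on the ``new'' lines $D_k$ for $k$ between $b+1/2$ and $b'$. Here I would invoke Lemma \ref{l:3d} on the tall rectangle $R' = R_{[0,a],[0,b']}$ (which has $a(R') = a+1/2 \ge 10 b(R')$? — no, that inequality can fail, so actually one should subdivide $I=[0,a]$ into $O(b'/a)$-many... wait, $a \ge b$ so $a \ge b' /2$, hence $a(R') \ge b(R')/2 - $ const, which is the wrong direction for Lemma \ref{l:3d}). The correct move: cover $D_{b_2}\cap R'$-type data differently — apply Lemma \ref{l:3d} to sub-rectangles of $R'$ of the form $R_{[j\ell, (j+1)\ell],[0,b']}$ with $\ell \approx 10 b'$, so that each has $a(\text{sub}) \ge 10 \, b(\text{sub})$. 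For each such sub-rectangle we need: (i) $|U| \le A^{a+b}$ on its two bottom lines $k=0, 1/2$, which holds since those lines lie in the good rectangle $R_{a,b}$; and (ii) $|U| \le 1$ on at least half the cells of its top line $k = b'$. Point (ii) is where the hypothesis ``$|U|>1$ on fewer than $10^{-5}|R_{a,b}|$ cells of $R_{a,3b}$'' enters: the top line sits inside $R_{a,3b}$ (since $b' \le 2b \le 3b$), it has $\approx 10 b'$ cells, and there are at most $10^{-5}|R_{a,b}| \le 10^{-5}\cdot (a+1)(b+1) \le 10^{-5}\cdot 11 a \cdot 2b$ bad cells total; comparing with $\gtrsim \ell \ge b'$ cells on the line and summing over the $\approx a/\ell$ sub-rectangles, a counting/pigeonhole argument shows that on most sub-rectangles (in fact all, or all but a controlled set) at least half the top-line cells are good. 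One has to be a little careful: a given sub-rectangle's top line might be entirely bad; so instead I would first note the total bad-cell count forces at least half the cells on the \emph{whole} line $D_{b'}\cap R'$ to be good, then note Lemma \ref{l:3d} really only needs ``half of $T_{b_2}$'' — but it needs half of the top line of each sub-rectangle. To fix this, shift/choose the subdivision, or better: apply Corollary \ref{cor: remez}/Lemma \ref{l:3d} to one long rectangle after reducing to the zero-bottom-data case via Observation \ref{obs2}, exactly as in Step 1 of Lemma \ref{l:3d}'s proof, where only half of the whole top line is needed.

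So the cleaner route, which I would actually follow: (1) subtract off, using Observation \ref{obs2}, the harmonic function $U_1$ agreeing with $U$ on the two lines $k=0,1/2$ of $R' = R_{[0,a],[0,b']}$; since those lines are in the good rectangle, $\max_{R_{a,b}}|U_1| \le 7^{a+b'} A^{a+b} \le A'^{a+b'}$ for a slightly larger constant, hence in particular $|U_1| \le A'^{a+b'}$ on the line $k=b'$. (2) Then $U_2 = U - U_1$ vanishes on $k=0,1/2$, and on the line $k=b'$ it satisfies $|U_2| \le 1 + A'^{a+b'}$ on at least half the cells (those where $|U| \le 1$), which is at least half by the bad-cell count since $D_{b'}\cap R' \subset R_{a,3b}$ has $\ge a \ge \tfrac12 (a+1) > 2 \cdot 10^{-5}|R_{a,b}|$ cells... — here one uses $a \ge b$ again to compare $a$ with $|R_{a,b}| \approx ab$. (3) Apply Corollary \ref{cor: remez} to $U_2$ on $R'$ (aspect ratio $a(R') \ge 10 b(R')$ fails in general — so here one genuinely must chop $R'$ into $\le a/(10b') + 1$ pieces of horizontal length $\ge 10 b'$, apply the corollary to each, and the ``half the top-line cells good'' hypothesis survives on each piece provided the subdivision is chosen so no single piece absorbs too many bad cells; since bad cells on that line number $\le 10^{-5} |R_{a,b}| \ll b' \le$ piece-length, this is fine piece-by-piece). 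This yields $|U_2| \le C^{b'}(1+A'^{a+b'})$ on all of $D_{b'}\cap R'$. (4) Now $|U| = |U_1 + U_2|$ is bounded by $A''^{a+b'}$ on the three lines $k=0,1/2,b'$ of $R'$, and Lemma \ref{l:3d} (after a final subdivision into pieces of the right aspect ratio) gives $\max_{R'}|U| \le A''^{a+b'} C_1^{a+b'}$. (5) Finally, check that with $A = C_1^{800}$ all the accumulated constants ($7$, $C$, $C_1$, etc.) combine to something $\le A^{(a+1/2)+(b'+1/2)}$; this is exactly what the generous choice $800$ in the exponent is for. The main obstacle I anticipate is the bookkeeping in steps (3)–(5): making sure the subdivision into correct-aspect-ratio sub-rectangles is compatible with the ``at least half the top-line cells are good'' hypothesis uniformly, and verifying the final constant inequality — these are routine but fiddly, and the $10^{-5}$ and $800$ are presumably tuned precisely so it closes.
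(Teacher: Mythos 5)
There is a genuine gap. In your ``cleaner route'' you fix the target line $D_{b'}\cap R'$ and claim that the hypothesis on bad cells ``forces at least half the cells on the whole line $D_{b'}\cap R'$ to be good.'' This is false. The hypothesis only bounds the \emph{total} number of bad cells in $R_{a,3b}$ by $10^{-5}|R_{a,b}|\approx 10^{-5}\cdot 2ab$, while a single line $D_{b'}\cap R'$ contains only about $a$ cells. As soon as $b\gtrsim 10^4$ the bad-cell budget exceeds $a$, so nothing prevents \emph{every} cell on your specific line $D_{b'}$ from being bad. The same problem of course infects your earlier version of the argument (the per-sub-rectangle pigeonhole), and you half-noticed it (``a given sub-rectangle's top line might be entirely bad'') but the fix you reached for does not work.

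The paper sidesteps this with an idea absent from your proposal: instead of one jump from height $b$ to a \emph{prescribed} height $b'$, it makes $40$ small jumps to \emph{chosen} heights $b_1<\dots<b_{40}$ with $b_{40}\in(2b,3b)$, where each $b_k$ is selected from an interval of length $\approx b/40$ by pigeonhole so that at least half the cells on $D_{b_k}\cap R_{a,3b}$ are good. This selection is possible precisely because the bad-cell budget $10^{-5}\cdot 2ab$ is much smaller than $\tfrac12\cdot a\cdot(b/40)\cdot 2\approx ab/40$, the number of bad cells that would be needed to ruin every line in the strip. The small jumps also automatically satisfy the aspect-ratio hypothesis of Lemma \ref{l:3d}: each jump rectangle has width $a$ and height $b_{k+1}-b_k+\tfrac12\le 3b/40+\tfrac12\le a/10$, so no horizontal chopping is needed (your chopping also introduces the compatibility headaches you flagged). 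Finally, the bound on $b'$ is obtained after the fact: one proves $M_{40}\le A^{a+\frac32 b+1}$ and then uses $b'\ge 3b/2$ together with $R_{a,b'}\subset R_{a,b_{40}}$ to conclude. Your per-step loss estimate $M_{k+1}\le M_k C_1^{a}$ and the final constant check $C_1^{40a}\le A^{b/2}$ (using $a\le 10b$ and $A=C_1^{800}$) match the paper; it is the selection of the intermediate lines that you are missing, and it is the crux of the lemma.
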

\begin{proof} If $b \leq 40$, then $a \leq 400$ because $R_{a,b}$ is good. In this case $|R_{a,4b}| \leq 10^5$ and
$|U| \leq 1$ everywhere on $R_{a, 4b}$.

 Assume that $b > 40$. 
 For each $k=1,2, \dots, 40$ we can choose  a number $$b_k \in \frac{1}{2}\mathbb{Z} \cap (b(1+(2k-1)/40),b(1+2k/40))$$ such that at least half of
cells $z$ on the line $D_{b_k}\cap R_{a,3b}$ satisfy $|U(z)| \leq 1$.
 Define $b_0=b$.
 Note that $$ b/40 \leq b_{k}-b_{k-1} \leq 3b/40 \leq 3a/40 \leq a/10 - 1/2 $$ and $b_{40} \in (2b, 3b)$.

 Denote by $M_k=\max_{R_{a,b_k}}|U|$.
 It suffices to show that $M_{40} \leq A^{a + \frac{3}{2}b +1}.$  Indeed, it implies that
$$\max_{R_{a,b'}}|U|\le \max_{R_{a,2b}}|U|\le M_{40} \le A^{a+\frac{3}{2}b+1}\le A^{a+b'+1},$$
and thus that $R_{a,b'}$ is good.% remind that $a(R_{a,b'})=a+1/2$ and $b(R_{a,b'})=b'+1/2$.

 Since $R_{a,b}$ is good, we know that $M_0 \leq A^{a+b+1}$. 
We show that 
$$M_{k+1}\leq M_k C_1^{a}\quad{\text{for}}\ k=0,1,2, \dots 40.$$ 
Consider the rectangle 
$$R_k=\{(s,t) \in \mathbb{Z}^2_{\diamondtimes} : 0 \leq s \leq a, b_k -1/2 \leq t \leq b_{k+1}\}$$
and note that $b_{k+1} -b_{k}+1/2 \leq  a / 10$. We have $|U| \leq M_k$ on $R_k\cap D_{b_k-1/2}$ and $R_k\cap D_{b_k}$ and also $|U|\leq 1$ on half of cells of  $R_k\cap D_{b_{k+1}}$. We therefore can apply Lemma \ref{l:3d}. We get  
 $ |U|\leq M_k C_1^{a}$
  on $R_{k+1}$ and hence
$$ M_{k+1}\leq M_k C_1^{a}.$$

 Recalling that $M_0 \leq A^{a + b+1}$ and using the inequality above consecutively for $k=0,1,...,39$,
 we get 
$$M_{40} \leq A^{a+b+1} C_1^{40 a} \leq A^{a+\frac{3}{2}b+1},$$
 provided that $A \geq C_1^{800}$.
\end{proof}

Given a square $R=R_{I,J}$ with $I=[a_1,a_1+l]$, $J=[b_1,b_1+l]$ and $a(R)=b(R)=l+1/2$ and an odd integer $n$, we denote by $nR$ the square with the same center and side length $a(nR)=b(nR)=n(l+1/2)$.
%\[nR=\{(s,k): \frac{n+1}{2}a_1-\frac{n-1}{2}a_2\le s\le \frac{n+1}{2}a_2-\frac{n-1}{2}a_1, \frac{n+1}{2}b_1-\frac{n-1}{2}b_2\le k\le \frac{n+1}{2}b_2-\frac{n-1}{2}b_1\] 
\begin{corollary} \label{pr: max-rect}
If $ R \subset \mathcal{Q} $ is a  good  square and  $ 9R \subset \mathcal{Q} $, then either 
$$|9R\cap\{|U| > 1 \}|\ge \frac{1}{10^5}|R|,$$ or $3R$ is  good.
\end{corollary}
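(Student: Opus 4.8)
The plan is to derive the corollary from Lemma~\ref{l:ext}, applied four times --- once to enlarge $R$ across each of its four sides. We may assume the first alternative fails, i.e.\ $|9R\cap\{|U|>1\}|<\tfrac1{10^5}|R|$, and must then show that $3R$ is good. Fix coordinates so that $R$ is the square centred at the origin with $a(R)=b(R)=L$; then $3R$ and $9R$ are the concentric squares of side $3L$ and $9L$. Besides Lemma~\ref{l:ext} itself we use the variants of it obtained by interchanging the two coordinates or reflecting one of them; these are valid by the remark preceding the lemma, and together they allow one to enlarge a rectangle across any chosen one of its four sides.

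First we enlarge $R$ to the right. Since $R$ is a square, the relevant hypothesis $b(R)\ge a(R)\ge 1/2$ holds; the rectangle whose set $\{|U|>1\}$ must be controlled is the $3L\times L$ rectangle obtained by tripling $R$ rightwards, and it is contained in $9R$, so it carries at most $|9R\cap\{|U|>1\}|<\tfrac1{10^5}|R|$ cells with $|U|>1$. As every rectangle we produce contains $R$, this number is at most $\tfrac1{10^5}$ times the area of the current rectangle, exactly as Lemma~\ref{l:ext} requires. Taking the new side length to be $2L$ (the upper endpoint of the permitted interval $[3a/2,2a]$) we obtain a good rectangle $R_1$ of size $2L\times L$ whose right side coincides with the right side of $3R$. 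We then repeat three more times: enlarge $R_1$ upwards to a good $2L\times 2L$ square $R_2$, then $R_2$ leftwards to a good $3L\times 2L$ rectangle $R_3$, then $R_3$ downwards; a direct check of the parameters shows that the rectangle produced by the fourth step is exactly $3R$, which is therefore good.

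At each of the four steps three things must be checked. (i) The current rectangle satisfies the aspect-ratio hypothesis of the variant of Lemma~\ref{l:ext} that is applied: the successive sizes are $L\times L$, $2L\times L$, $2L\times 2L$, $3L\times 2L$, so the ratio of the sides never exceeds $2$ and the side being tripled is always the shorter one (or equal to the other). (ii) The new side length belongs to the permitted interval ($[3b/2,2b]$ or $[3a/2,2a]$ according to which side grows): in each step it is an endpoint of this interval --- the one that carries the growing side onto the corresponding side of $3R$ --- and it is a half-integer. (iii) The tripled rectangle lies inside $9R$: one of its sides is a side of the current rectangle, hence lies inside $3R\subseteq 9R$, while the perpendicular side, of length at most $3\cdot 2L=6L$, is obtained by pushing one side of the current rectangle through the corresponding side of $3R$ and into $9R$; the factor $9$ in the statement is precisely what makes this fit, and in the leftward and downward steps the containment is tight, with equality along one side. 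I expect (iii) to be the only real difficulty --- the rest is elementary bookkeeping with the half-integer coordinates --- but it works out exactly, because $9=3\cdot 3$.
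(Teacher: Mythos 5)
Your proof is correct and takes essentially the same route as the paper. The paper's own proof is a single sentence: apply (rotated and shifted versions of) Lemma~\ref{l:ext} four times, passing through the chain $R_{a,a}\to R_{2a,a}\to R_{2a,2a}\to R_{[-a,2a],[0,2a]}\to R_{[-a,2a],[-a,2a]}$, which is exactly your right--up--left--down sequence of intermediate rectangles. You supply the verification that the paper leaves implicit --- the aspect-ratio hypothesis at each step, the choice of endpoint of the permitted extension interval, the monotone use of the hypothesis $|9R\cap\{|U|>1\}|<10^{-5}|R|\le 10^{-5}|R_{\text{current}}|$, and the containment of the tripled control rectangle in $9R$ (tight along one side in the last two steps). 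One cosmetic remark: both you and the paper are slightly loose about the $+\tfrac12$ that separates the side length $a(R)=a_2-a_1+\tfrac12$ from the Lemma's parameter $a=a_2-a_1$, so ``new side length $2L$'' does not literally correspond to the upper endpoint $b'=2b$; this off-by-one-half bookkeeping is identical in the paper (its displayed fourth rectangle has side $3a+\tfrac12$, not $3a(R)=3a+\tfrac32$), and is easily absorbed by the slack in the constants, but it is worth noticing.
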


\begin{figure}[h!]
\centering
\includegraphics[scale=0.7]{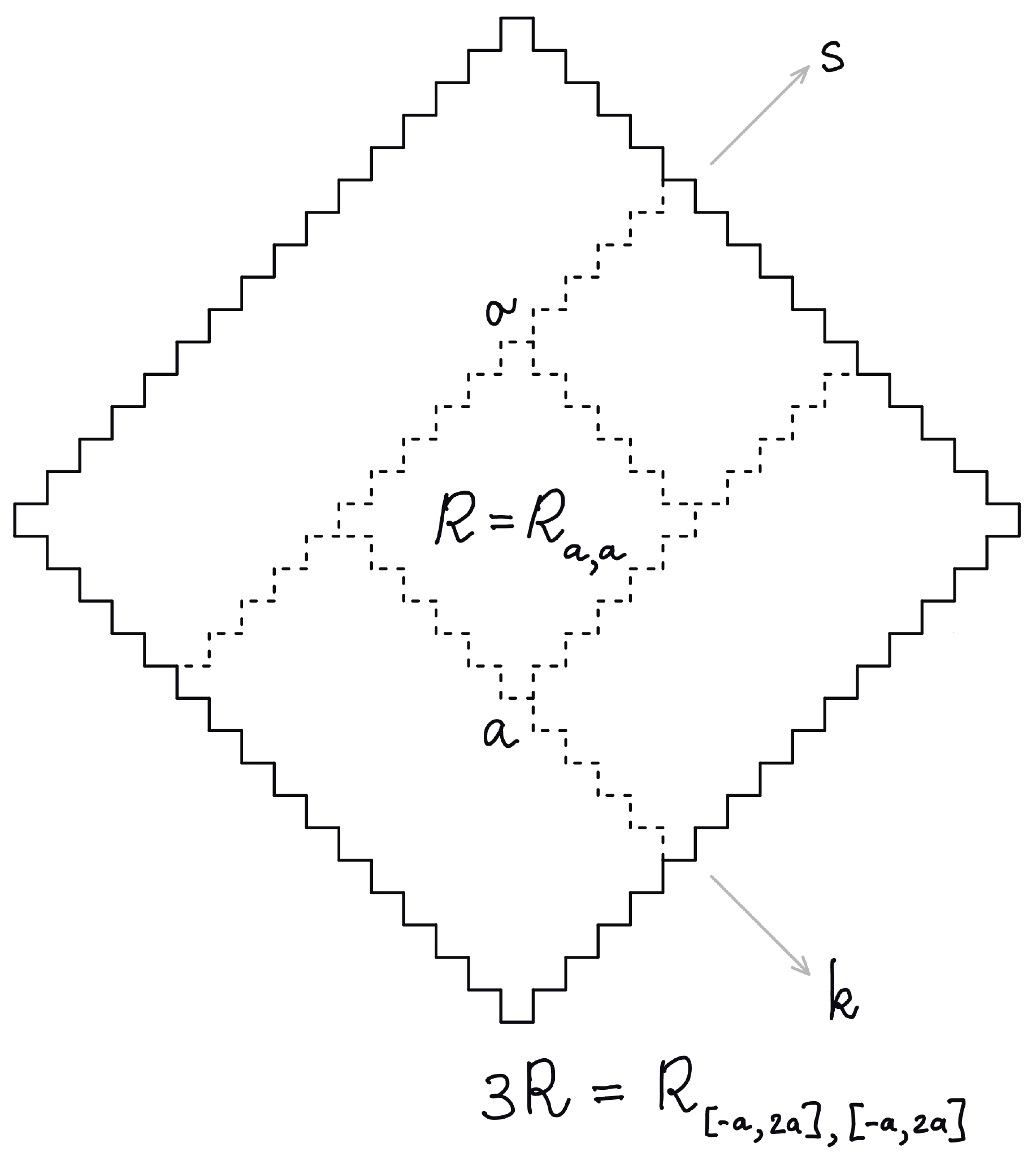}
\caption{\label{fig:cor-max-rect}} 
%To Corollary \ref{pr: max-rect}
\end{figure}

\begin{proof} 
 Assume that the number  of cells $ z $ in $ 9R $ with $ |U(z)| > 1 $ is less than $ \frac{1}{10^5}|R|$.
% In this case we can also assume that $a(R)=b(R)\geq 1/2$.
 Then applying (rotated and shifted versions of) Lemma \ref{l:ext}  four times, we can expand $R$ and show that $3R$ is good. If we shift the initial square to be $R_{a,a}$ then we obtain the following sequence of good rectangles $R_{a,a}, R_{2a,a}, R_{2a,2a}, R_{[-a,2a],[0,2a]}$ and finally $3R=R_{[-a,2a],[-a,2a]}$.
\end{proof}

\subsection{Maximal good squares and a covering lemma}
For the proof of Theorem (A$'$) we will consider good squares that are maximal with respect to inclusion and therefore the
portion of cells  near these squares with $|U|>1$ is not too small.
\begin{definition}
 We call a good  square $ R \subset \mathcal{Q}_N $ {\em maximal} for  $\mathcal{Q}_N$ if there is no good square $ R'$ such that   $ R \subsetneq R'\subset \mathcal{Q}_N $. 
\end{definition}

 Now, we formulate a proposition to be used in the proof of Theorem (A$'$).

\begin{prop}   \label{prop:main}
 Suppose that
$$| \mathcal{Q}_K \cap \{|U| > 1\} | \leq 10^{-20} |\mathcal{Q}_K|.$$ Then
there is a good square $R$ such that 
$$ \mathcal{Q}_{[K/100]} \subset R \subset \mathcal{Q}_{K}.$$
\end{prop}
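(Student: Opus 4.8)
The plan is to build the desired good square by starting from a tiny good square near the center of $\mathcal{Q}_K$ and repeatedly applying the expansion mechanism of Corollary \ref{pr: max-rect} until the square grows to linear size comparable to $K$, using the smallness hypothesis $|\mathcal{Q}_K\cap\{|U|>1\}|\le 10^{-20}|\mathcal{Q}_K|$ to guarantee that the obstruction alternative in that corollary never triggers before we have grown large enough. First I would observe that a good square of bounded size always exists: by the hypothesis, $|U|\le 1$ on all but a $10^{-20}$ fraction of $\mathcal{Q}_K$, so in particular (for $K$ large) there is some cell, indeed some small square $R_0\subset\mathcal{Q}_{[K/200]}$ of side length $O(1)$, on which $|U|\le 1$ everywhere; such a square is trivially good since $1\le A^{a(R_0)+b(R_0)}$. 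Take $R$ to be a good square of \emph{maximal} side length among good squares contained in $\mathcal{Q}_K$ whose center lies in, say, $\mathcal{Q}_{[K/400]}$ — or more simply, let $R$ be maximal for $\mathcal{Q}_K$ in the sense of the definition just given, chosen among those containing $R_0$. The point is to show this maximal $R$ must already contain $\mathcal{Q}_{[K/100]}$.

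The key iteration step: suppose $R$ is good and $9R\subset\mathcal{Q}_K$. Then by Corollary \ref{pr: max-rect}, either $|9R\cap\{|U|>1\}|\ge \frac1{10^5}|R|$, or $3R$ is good. In the second case we have tripled the size. So along the chain $R_0, R_1=3R_0, R_2=3R_1,\dots$ of successively tripled squares, as long as $9R_i\subset\mathcal{Q}_K$ and we are not yet done, each step either triples the square or exhibits a set of "bad" cells of size $\ge \frac1{10^5}|R_i|$ inside $9R_i\subset\mathcal{Q}_K$. The crucial counting point is that the squares $R_i$ have geometrically increasing areas, so $\sum_i |R_i|$ is dominated by a constant times $|R_{i_{\max}}|$, the largest one; hence if the bad alternative occurred at many steps, or even at one step with $R_i$ of size comparable to $K$, we would collect $\ge \frac1{10^5}|R_i| \gg 10^{-20}|\mathcal{Q}_K|$ bad cells inside $\mathcal{Q}_K$ (once $|R_i|\ge 10^{-15}|\mathcal{Q}_K|$, say), contradicting the hypothesis. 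Therefore the tripling alternative must hold at every step until $R_i$ first reaches side length $\gtrsim K$; since each step multiplies the side by $3$, after finitely many (at most $\log_3 K$) steps we obtain a good square whose side length exceeds $K/50$ while it is still contained in $\mathcal{Q}_K$, and by keeping track of the (bounded) displacement of the center during the tripling (the center stays put under $R\mapsto 3R$, and the initial $R_0$ was centered well inside), such a square contains $\mathcal{Q}_{[K/100]}$.

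To make the center bookkeeping clean I would actually fix $R_0$ to be a small good square centered at (or adjacent to) the origin — this is where the smallness hypothesis is used again: the bad set has density $\le 10^{-20}$, so a bounded square centered at the origin on which $|U|\le 1$ everywhere exists, hence is good. Under $R\mapsto 3R$ the center is preserved, so every $R_i$ in the chain is centered near the origin, and once $a(R_i)\ge K/50$ we get $\mathcal{Q}_{[K/100]}\subset R_i\subset\mathcal{Q}_K$; we then output this $R_i$ (it is good, possibly after one more check that $9R_i$ need not be contained in $\mathcal{Q}_K$ for the \emph{final} square — we only need $9R_i\subset\mathcal{Q}_K$ at the steps where we invoke Corollary \ref{pr: max-rect}, i.e. while $a(R_i) < K/5$, which is exactly the regime before the last successful tripling).

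The main obstacle I anticipate is the quantitative matching of the constants: one must verify that $\frac1{10^5}|R_i|$ summed (or even taken at the single largest relevant scale) genuinely exceeds $10^{-20}|\mathcal{Q}_K|$ before the square has grown past $\mathcal{Q}_{[K/100]}$, i.e. that there is "room" between the scale at which the bad-set count becomes prohibitive and the scale $K/100$ we are targeting. This is a routine but careful chase: since $|\mathcal{Q}_K|\asymp K^2$ and the largest square $R_i$ reached with $9R_i\subset\mathcal{Q}_K$ has $|R_i|\asymp (K/5)^2 \asymp K^2$, the ratio $|R_i|/|\mathcal{Q}_K|$ is bounded below by an absolute constant, say $\ge 10^{-3}$, so $\frac1{10^5}|R_i|\ge 10^{-8}|\mathcal{Q}_K| > 10^{-20}|\mathcal{Q}_K|$ — comfortably enough, and in fact the slack shows the exponents $10^{-20}$ and $10^{-5}$ in the statement are far from tight, which is consistent with the paper's style of using generous constants. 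A secondary nuisance is handling small $K$ separately (the argument only needs $K$ large enough that a bounded good square fits well inside $\mathcal{Q}_{[K/400]}$), which is harmless since the proposition is only applied for large $K$.
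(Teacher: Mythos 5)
Your single-chain expansion argument has a genuine gap in the scale where the chain can get stuck. You start with a small good square $R_0$ centered at the origin and triple it repeatedly via Corollary \ref{pr: max-rect}. At each step the dichotomy gives: either $3R_i$ is good, or $|9R_i\cap\{|U|>1\}|\ge 10^{-5}|R_i|$. The chain stops the \emph{first} time the second alternative fires, so within a single chain you only ever collect one batch of bad cells, of size $\ge 10^{-5}|R_i|$ where $R_i$ is the stopping square. You then need this to exceed $10^{-20}|\mathcal{Q}_K|$, which by your own computation requires $|R_i|\gtrsim 10^{-15}|\mathcal{Q}_K|$, i.e.\ roughly $a(R_i)\gtrsim 10^{-7.5}K$. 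But nothing in the argument prevents the chain from dying at a much smaller scale — say $a(R_i)=10$ or $a(R_i)=\sqrt{K}$ — where $10^{-5}|R_i|$ is negligible compared to $10^{-20}|\mathcal{Q}_K|$. The global $10^{-20}$ density bound is a statement about $\mathcal{Q}_K$ as a whole; a single chain rooted at the origin only probes the density in the nested annuli $9R_i$ near the origin, and there is no reason the bad cells cannot be locally dense right around your starting square while globally sparse. Your remark that ``the squares $R_i$ have geometrically increasing areas, so $\sum_i|R_i|$ is dominated by a constant times the largest one'' does not help here, because the bad alternative can occur at most once along one chain, and the $9R_i$ are nested so the bad cells at different scales would not be disjoint anyway.

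This is precisely the difficulty the paper's proof is designed to overcome: instead of one chain, the paper looks at the whole family of \emph{maximal} good squares that meet $\mathcal{Q}_{[K/100]}$. Their union covers at least half of $\mathcal{Q}_{[K/100]}$ (since every small-scale cell with $|U|\le 1$ sits in some maximal square), hence has size $\gtrsim 10^{-5}|\mathcal{Q}_K|$. If none of them has side $\ge K/50$, then \emph{each} one, being maximal, has $|9R\cap\{|U|>1\}|\ge 10^{-5}|R|$. A Vitali-type covering lemma extracts a disjoint subfamily $\{9R\}$ carrying a fixed proportion of the total mass, so the bad cells in $\mathcal{Q}_K$ collectively exceed $\sim 10^{-15}|\mathcal{Q}_K|$, contradicting the hypothesis. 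The aggregation over many squares (and the disjointness from Vitali so that bad cells are not double-counted) is the ingredient your proposal is missing. To repair your approach you would need to pass from one chain to a cover by many maximal good squares and sum their bad contributions — at which point you essentially reconstruct the paper's argument.
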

\begin{proof} 
 Consider the collection $\mathbb{M}$ of all maximal for $\mathcal{Q}_{[K/10]}$ squares that contain at least one cell in $\mathcal{Q}_{[K/100]}$.  
 Note that the total number of cells in all maximal squares satisfies 
\begin{equation} \label{eq:10}
 |\bigcup\limits_{\mathbb{M}}  R| \geq \frac{1}{2} |\mathcal{Q}_{[K/100]}| \geq \frac{1}{10^5} |\mathcal{Q}_{K}|.
\end{equation}
 This is true because each cell $z\in \mathcal{Q}_{[K/100]}$ with $|U(z)| \leq 1$ is contained in some maximal square and   $\{ |U|\leq 1\}$ occupies at least a half of $\mathcal{Q}_{[K/100]}$.

We consider two cases:\\
 (1)  $a(R_0)\geq K/50$ for some $R_0\in \mathbb{M}$, 
(2)
$a(R)\leq K/50$ for every $R\in\mathbb{M}$.
%\end{enumerate} 

We show that in the first case the conclusion of Proposition \ref{prop:main} holds and that the second case never occurs.

First, suppose that there is $R_0\in\mathbb{M}$ with $a(R_0)\geq K/50.$ Since $R_0\subset \mathcal{Q}_{[K/10]}$, we have $9R_0\subset \mathcal{Q}_K$.  Further,
$$|\{ |U|>1 \}\cap 9 R_0 |\leq|\{ |U|>1 \}\cap \mathcal{Q}_{K} | \leq 10^{-20} |\mathcal{Q}_{K}| \leq 10^{-7} |R_0|.$$ 
Applying   Corollary \ref{pr: max-rect} for $R_0$ we conclude that $3R_0$ is also good. % We repeat to apply Corollary \ref{pr: max-rect} till we find a good square $R_1=3^m R_0 \subset Q_K$ such that  $10 R_1$ is not contained in $Q_K$.
Since $R_0$ intersects $\mathcal{Q}_{[K/100]}$ and $a(R_0)>K/50$, we see that the good square $R=3 R_0 $ contains  $\mathcal{Q}_{[K/100]}$ and  the proposition is proved in the first case.

 For the second case, we have $a(R)\le K/50$ for each $R\in\mathbb{M}$. Consider any $R\in\mathbb{M}$. Since $R$ intersects $\mathcal{Q}_{[K/100]}$ we see that $3R\subset \mathcal{Q}_{[K/10]}$.
Thus, by the maximality of $R$ in $\mathcal{Q}_{[K/10]}$, $3R$ is not good. Then Corollary \ref{pr: max-rect} implies
\begin{equation} \label{eq:4r}
 |9R \cap \{ |U|>1\}| \geq \frac{1}{10^5} |R|.
\end{equation}

We will use  the following Vitali-type covering lemma:\\
{\it Given a finite collection $\mathcal{M}=\{q_j\}$ of squares with sides parallel to the coordinate axis, there exists a subcollection $\mathcal{M}'=\{q_{j_k}\}$ such that $q_{j_k}$ are pairwise disjoint and $$\bigcup_{\mathcal{M}'}3q_{j_k}\supset \bigcup_{\mathcal{M}}q_j.$$}\\
The statement is simple and is proved by selecting the largest possible square on each step such that the chosen subcollection remain disjoint, we refer the reader to \cite[Chapter 1]{G}. A similar standard argument for balls in $\R^n$ can be found for example in \cite{T}.%, it works in any metric space.

% \textbf{Vitali covering lemma.} 
Let $\mathbb{M}$ be the collection of  maximal squares as above. We apply the covering lemma to the collection of squares $9 R$ for $R\in\mathbb{M}$. (Note also that $9R\subset\mathcal{Q}_K$ for each $R\in\mathbb{M}$.) % $\mathbb{Z}^2$.
There exists a subcollection $\mathbb{M}' \subset \mathbb{M}$ such that 
\[ \mathop\bigcup\limits_\mathbb{M'}  27R  \supset \mathop\bigcup\limits_\mathbb{M}  9R\]
and 
$9R_1$ and $9R_2$ are disjoint for any distinct $R_1,R_2 \in \mathbb{M'}$.  Then we get 
\[ \sum_{\mathbb{M}'}|R|  \geq \frac{1}{10^3} |\mathop\bigcup\limits_\mathbb{M}  R|\]
and, since $\{9R, R\in\mathbb{M}'\}$ are disjoint,  \eqref{eq:4r}  implies
$$|\{ |U|>1\} \cap \mathcal{Q}_K | \geq \sum_{R\in\mathbb{M}'} |9R \cap \{ |U|>1\}| \geq \sum_{R\in\mathbb{M}'} \frac{1}{10^5} |R|.$$
Thus 
\[|\{ |U|>1\} \cap \mathcal{Q}_K | > \frac{1}{10^{10}} |\mathop\bigcup\limits_\mathbb{M}  R|\]
and by \eqref{eq:10}
$$|\{ |U|>1\} \cap \mathcal{Q}_K | \geq \frac{1}{10^{15}}|\mathcal{Q}_K|.$$ 
This contradicts the assumption of the proposition, hence the second case never occurs. Therefore we can always cover $\mathcal{Q}_{[K/100]}$ by a good rectangle.
\end{proof}

\subsection{Proof  of Theorem (A$'$)}
By the assumption 
$$|\mathcal{Q}_N\cap\{|U|\leq 1 \} \}| \geq (1-\varepsilon)|\mathcal{Q}_N|.$$
Our goal is to show that if $\varepsilon>0$ is sufficiently small and $N$ is sufficiently large, $N\geq N(\varepsilon)$, then
 $$ \max\limits_{\mathcal{Q}_{N/2}} |U| \leq  e^{a_1N}, $$
where  
 $ a_1=a_1(\varepsilon) \to 0$ as $\varepsilon \to 0$.

 %We assume that $N$ is sufficiently large.
 Let $K< N/200$ and cover $\mathcal{Q}_{N/2}$ by $\simeq ([N/K])^2$ squares  $\mathcal{Q}_{K}(x_i)$ such that 
 $\mathcal{Q}_{100K}(x_i) \subset \mathcal{Q}_N$.  The number of cells in $\mathcal{Q}_{100K}(x_i)$ with $|U|>1$ is less than $\varepsilon|\mathcal{Q}_N|$.
 Hence
$$ |\mathcal{Q}_{100K}(x_i)\cap\{|U|> 1 \} \}| \leq  \varepsilon \left(\frac{N}{K}\right)^2 |\mathcal{Q}_K|.$$
 If we assume that $K=cN$ and   $\varepsilon  \leq c^2/10^{20}$, then by Proposition \ref{prop:main}, for each $i$, 
 there is a good square $R_i$ such that $\mathcal{Q}_K(x_i) \subset R_i \subset \mathcal{Q}_{100K}(x_i)$. By the definition of a good square we have
$$\max_{R_i}|U| \leq A^{200K} =e^{N 200 c\log A  }$$ 
Thus $|U|\le e^{N 200 c\log A  }$ in each $\mathcal{Q}_K(x_i)$  and therefore in $\mathcal{Q}_{N/2}$. We had proved the first part of Theorem (A$'$).

 To prove that $a_1(\varepsilon)\to 0$ as $\varepsilon\to 0$, we fix any $a_1>0$  and choose $K=K(N,a_1,\varepsilon)$ so that  $$A^{200K} \leq e^{a_1N} \  \textup{ and }\  \varepsilon \left(\frac{N}{K}\right)^2 \leq 1/10^{20},$$  and we can make such a choice  if $\varepsilon$ is sufficiently small.

\section*{Appendix}
\setcounter{section}{1}
\setcounter{subsection}{0}
\setcounter{theorem}{0}
\renewcommand{\thesection}{\Alph{section}}
The aim of the appendix is to prove Theorem \ref{th:three}. The proof is a  modification of the one given in \cite{GM}. First, we write down explicit formulas for the discrete Poisson kernel and prove an estimate for its analytic continuation into the complex plane, as it was done in \cite{GM}. Then we apply polynomial approximation and the discrete version of the Remez inequality to finish the proof. Note that we return to  the standard lattice $\mathbb{Z}^2$ and the notation used in the first part of the text.

\subsection{The discrete Poisson kernel}
 For each integer $k\in (0,2N)$ we define $a_k$ to be the only positive solution of the equation
\[
\cosh  a_k=2-\cos \frac{k\pi }{2N}.\]
Then 
\[
f_k(n,m)=\sin \left(\frac{\pi k n}{2N}\right)\sinh( a_k m)\]
is a discrete harmonic function. Now we fix an integer $n_1\in[-N+1,N-1]$ and consider a discrete harmonic function of $(n,m)$
\begin{equation}\label{eq:Pker}
F(n,m)=\frac{1}{N}\sum_{k=1}^{2N-1} \sin \left(\pi k \frac{n+N}{2N}\right)\sin \left(\pi k\frac{n_1+N}{2N}\right)\frac{\sinh a_k(m+N)}{\sinh 2a_kN}.
\end{equation}
Clearly $F(-N,m)=F(N,m)=F(n,-N)=0$. Furthermore, by the orthogonality identities for discretized trigonometric functions, we have $F(n_1,N)=1$ and $F(n,N)=0$ when $n\neq n_1, -N< n< N$. Thus $F(n,m)$ is the discrete Poisson kernel for the domain $Q_N$ at the boundary point $y=(n_1,N)$. We denote it by $P(x,y)$, where $x=(n,m)$. Poisson kernel on the three other sides of the square can be computed in a similar way. We define the boundary of $Q_N$ by
\[\partial Q_N=\{(n_1, m_1)\in Q_N: \max\{|n_1|,|m_1|\}=N,\ |n_1|\neq|m_1|\},\]
it consists of the four sides of the square without the corners, denote the set of these four corners by $K_N$. The values of a discrete harmonic function on $Q_N\setminus K_N$ are defined by its values on $\partial Q_N$.  More precisely, for any discrete harmonic function $u$ in $Q_N$, we have
\begin{equation}\label{eq:Poisson}
u(x)=\sum_{y\in\partial Q_N} P(x,y)u(y),\quad x\in Q_N\setminus K_N.\end{equation}

We need the following  statement.

\begin{lemma}\label{pr:ext}
For any $y\in\partial Q_N$ and any integer $m\in[-N/2,N/2]$ the function $g(t)=P((tN,m),y)$  has a holomorphic extension on \[\Omega=\{z: |{\rm{Re}} (z)|\le 1/2, |{\rm{Im}} (z)|\le 1/16\}\subset \C,\] this extension  satisfies $|g(z)|\le CN^{-1}$ for all $z\in \Omega$.
\end{lemma}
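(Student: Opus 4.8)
The plan is to work directly from the explicit formula \eqref{eq:Pker} for the Poisson kernel on the side $y=(n_1,N)$, the other three sides being completely analogous by symmetry. Fix $m\in[-N/2,N/2]$. Substituting $n=tN$ into \eqref{eq:Pker} gives
\[
g(t)=\frac1N\sum_{k=1}^{2N-1}\sin\!\left(\pi k\,\tfrac{t+1}{2}\right)\sin\!\left(\pi k\,\tfrac{n_1+N}{2N}\right)\frac{\sinh a_k(m+N)}{\sinh 2a_kN}.
\]
Each summand is, after the substitution $t\mapsto z$, an entire function of $z$ (a trigonometric polynomial in $z$), so $g$ extends to an entire function and the only issue is the size estimate on $\Omega$. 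First I would record the elementary asymptotics of $a_k$: since $\cosh a_k=2-\cos\frac{k\pi}{2N}$, for small $k/N$ one has $a_k\asymp k/N$, while in general $a_k$ increases with $k$ up to $k=N$ and then (by the symmetry $\cos\frac{(2N-k)\pi}{2N}=-\cos\frac{k\pi}{2N}$ is false — rather $a_k$ is determined by $\cos\frac{k\pi}{2N}$, which is symmetric about... ) — more carefully, $2-\cos\theta$ ranges in $[1,3]$, so $a_k$ stays bounded above by $\operatorname{arccosh}3$ and below we have $a_k\ge c\min(k,2N-k)/N$ for a numerical $c>0$. The key consequence is the ratio bound: for $m\in[-N/2,N/2]$, so that $m+N\in[N/2,3N/2]$,
\[
\left|\frac{\sinh a_k(m+N)}{\sinh 2a_kN}\right|\le \frac{\sinh a_k(m+N)}{\sinh 2a_kN}\le e^{-a_k(2N-(m+N))}\le e^{-a_k N/2}\le e^{-cN^{-1}\min(k,2N-k)\cdot N/2}=e^{-\frac c2\min(k,2N-k)}.
\]

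Next I would estimate the two sine factors when $t$ is replaced by $z\in\Omega$. The factor $\sin\!\left(\pi k\frac{n_1+N}{2N}\right)$ is real and bounded by $1$. For the other, write $z=x+iy$ with $|x|\le1/2$, $|y|\le1/16$; then
\[
\left|\sin\!\left(\pi k\,\tfrac{z+1}{2}\right)\right|\le \cosh\!\left(\tfrac{\pi k|y|}{2}\right)\le e^{\pi k|y|/2}\le e^{\pi k/32}.
\]
Combining, each summand is bounded in absolute value by $e^{\pi k/32}e^{-\frac c2\min(k,2N-k)}$. Provided the numerical constant $c$ from the lower bound on $a_k$ satisfies $c>\pi/16$ (which I would verify from $2-\cos\theta\ge 1+\theta^2\cdot\text{const}$ near $0$ and a direct bound away from $0$ — if the crude constant is too small, the remedy is to shrink the strip $|{\rm Im}\,z|\le1/16$; but since the lemma fixes $1/16$, I will need the honest inequality $a_k\ge\frac{\pi}{2}\cdot\frac{\min(k,2N-k)}{2N}$ valid because $\cosh a_k-1=1-\cos\frac{k\pi}{2N}=2\sin^2\frac{k\pi}{4N}$ and $\operatorname{arccosh}(1+s)\ge\sqrt{s}$, giving $a_k\ge\sqrt2\,|\sin\frac{k\pi}{4N}|\ge\sqrt2\cdot\frac{2}{\pi}\cdot\frac{k\pi}{4N}=\frac{k}{\sqrt2\,N}$ for $k\le N$, hence $a_k\ge\frac{\min(k,2N-k)}{\sqrt2\,N}$, and $\frac{1}{2\sqrt2}>\frac{\pi}{32}$). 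Then $\sum_{k=1}^{2N-1}e^{\pi k/32}e^{-\frac12\cdot\frac{1}{\sqrt2}\min(k,2N-k)}$ is bounded by a convergent geometric series, uniformly in $N$, so $|g(z)|\le \frac1N\cdot C$ on $\Omega$, which is the claim.

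The main obstacle is precisely this constant-chasing: one must be sure the exponential decay coming from $\sinh a_k(m+N)/\sinh 2a_kN$ genuinely beats the exponential growth $e^{\pi k/32}$ of the analytically continued sine, and this forces a non-wasteful lower bound on $a_k$ valid for all $k\in(0,2N)$, not just for small $k$. A clean way to package it is to prove the single inequality $a_k\ge\frac{1}{\sqrt2\,N}\min(k,2N-k)$ for all $k$ from $2-\cos\frac{k\pi}{2N}=1+2\sin^2\frac{k\pi}{4N}$ together with $\operatorname{arccosh}(1+s)\ge\sqrt s$ and $\sin x\ge\frac{2}{\pi}x$ on $[0,\pi/2]$. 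Once that is in hand, splitting the sum into $1\le k\le N$ and $N< k\le 2N-1$ and using the geometric-series bound finishes the proof; the holomorphy is automatic since each term is entire and the series converges uniformly on $\Omega$. The three remaining sides of $\partial Q_N$ are handled by the analogous explicit formulas (interchanging the roles of the two coordinates and/or reflecting), with the constraint $m\in[-N/2,N/2]$ ensuring that for the sides $\{m_1=\pm N\}$ the distance from the evaluation line to the far side is $\ge N/2$, and for the sides $\{n_1=\pm N\}$ one uses instead that $|{\rm Re}(z)|\le1/2$ keeps $tN$ at distance $\ge N/2$ from those sides.
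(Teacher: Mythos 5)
Your overall strategy — use the explicit sum \eqref{eq:Pker}, extract exponential decay in $k$ from the hyperbolic-sine ratio, and check that it beats the growth $e^{\pi k/32}$ of the analytically continued sine — is exactly the paper's approach. But your lower bound on $a_k$ is wrong in a way that kills the estimate.

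You assert $a_k\ge c\,\min(k,2N-k)/N$, and at the end $a_k\ge \frac{\min(k,2N-k)}{\sqrt2\,N}$. This decays to zero as $k\to 2N$, whereas $a_k$ does not: since $\cosh a_k=2-\cos\frac{k\pi}{2N}$ is \emph{increasing} in $k$ on $(0,2N)$ and equals $2$ at $k=N$ and tends to $3$ at $k=2N$, one has $a_k\ge a_N=\operatorname{arccosh}2>1$ for all $k\ge N$. With your $\min$-bound the claimed estimate breaks down: the term $k=2N-1$ in $\sum_{k=1}^{2N-1} e^{\pi k/32}\,e^{-\frac{1}{2\sqrt2}\min(k,2N-k)}$ is of order $e^{\pi(2N-1)/32}$, so the sum is not bounded uniformly in $N$ and certainly is not a convergent geometric series. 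The error is already visible in your own intermediate computation: you derive $a_k\ge\sqrt2\,\sin\frac{k\pi}{4N}$, and $\sin\frac{k\pi}{4N}$ is \emph{increasing} on $(0,2N)$ (the argument runs from $0$ to $\pi/2$), so there is no symmetry about $k=N$ and no reason for a $\min(k,2N-k)$ to appear.

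The fix is immediate and recovers the paper's bound. From $a_k\ge\sqrt2\sin\frac{k\pi}{4N}$: for $k\le N$, concavity of $\sin$ on $[0,\pi/4]$ gives $\sin\frac{k\pi}{4N}\ge\frac{2\sqrt2}{\pi}\cdot\frac{k\pi}{4N}$, hence $a_k\ge k/N\ge k/(2N)$; for $N<k<2N$, $a_k\ge a_N>1\ge k/(2N)$. So $a_k\ge k/(2N)$ for all $k\in(0,2N)$, which is exactly the inequality the paper proves. Then $e^{\pi k/32}e^{-a_kN/2}\le e^{(\pi/32-1/4)k}$ with $\pi/32<1/4$, and the sum is a genuine geometric series bounded uniformly in $N$.

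One further caution: for the two sides $\{n_1=\pm N\}$ the $z$-dependence sits inside the $\sinh$ factor, not the sine factor, and the bound needed is $|\sinh(a_kN(z+1))|\le\cosh(3a_kN/2)$ (using $|{\rm Re}(z+1)|\le 3/2$), paired with a different lower bound on the denominator — a short but distinct computation from the top/bottom case. You acknowledge the role swap but do not carry it out; the paper does, and it again leans on $a_k\ge k/(2N)$.
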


\begin{proof} The holomorphic extension is given by \eqref{eq:Pker}. We want to prove the estimate.
Let $y=(n_1,m_1)$, we consider two cases: $|n_1|=N$ and $|m_1|=N$. 
We note that 
 $\cosh a_k=2-\cos\frac{k\pi}{2N}$, and we claim that $a_k\ge \frac{k}{2N}$. First when $N\le k\le 2N$, we have
\[\cosh a_k=2-\cos\frac{k\pi}{2N}\ge 2\]
and thus $a_k\ge 1\ge \frac{k}{2N}$ because $\cosh 1=(e+e^{-1})/2<2$.

For $0\le k\le N$ we use the inequality $\cos x\le 1-x^2/\pi$ when $0\le x\le \pi/2$ and  we obtain
\[\cosh a_k\ge 1+\frac{k^2\pi}{4N^2}\] which implies
\[2\sinh^2 a_k/2=\cosh a_k-1\ge \frac{k^2\pi}{4N^2}\ge \frac{k^2}{2N^2}.\]
We have that $\sinh 0=0$, $(\sinh t)'=\cosh t$ and $\cosh t$ is increasing when $t>0$. Then $\sinh t/2\le t/2\cosh 1\le t$ for $t\le 1$. Taking $t=k/(2N)$ we see that $\sinh a_k/2\ge t\ge \sinh t/2$ and
\[ a_k\ge \frac{k}{2 N}.\]

Then, for the first case we have,
\begin{align*}
|g(z)|&=|P((zN,m),(\pm N,m_1))|\\
&\le \frac{1}{N}\sum_{k=1}^{2N-1}\left|\frac{\sinh a_kN(z+1)}{\sinh2a_kN}\right|\\
&\le \frac{1}{N}\sum_{k=1}^{2N-1}\frac{\cosh 3a_kN/2}{\sinh2a_kN}\quad\quad(|{\rm{Re}}(z)|\le 1/2)\\
&\le \frac{1}{N}\sum_{k=1}^{2N-1}\frac{1}{\sinh a_kN/2}\quad\quad(\sinh 2t\ge \cosh 3t/2\sinh t/2)\\&\le \frac{C}{N} \quad\quad(a_kN/2\ge k/4). 
\end{align*} 

Similarly, for the case $|m_1|=N$, we get
\begin{align*}
|g(z)|&=|P((zN,m),(n_1,\pm N)|\\
&\le \frac{1}{N}\sum_{k=1}^{2N-1}\left|\sin\frac{\pi k(z+1)}{2} \right|\frac{\sinh (3a_kN/2)}{\sinh2a_kN}\\
&\le
\frac{1}{N}\sum_{k=1}^{2N-1}\cosh\frac{\pi k}{32} \frac{\sinh (3a_kN/2)}{\sinh2a_kN}\quad\quad(|{\rm{Im}}(z)|\le 1/16)\\
&\le
\frac{1}{N}\sum_{k=1}^{2N-1}\cosh\frac{a_kN}{4} \frac{\sinh (3a_kN/2)}{\sinh2a_kN}\quad\quad(\pi k/32\le k/8\le a_kN/4)\\
&\le \frac{1}{N}\sum_{k=1}^{2N-1}\frac{\sinh (7a_kN/4)}{\sinh2a_kN}\quad\quad(\sinh(s+t)\ge \cosh s\sinh t,\ s,t>0)\\
&\le \frac{1}{N}\sum_{k=1}^{2N-1}\frac{1}{\cosh a_kN/4}\\
&\le \frac{C}{N}\quad\quad(a_kN/4\ge k/8).
\end{align*}
\end{proof}

\begin{corollary}\label{l:ap1} Suppose that $u$ is a discrete harmonic function on $Q_N$ such that $|u|\le M$ on $Q_N$ and that $m_0$ satisfies $|m_0|<N/2$. Then there exists an analytic function $f=f_{m_0}$ defined in a neighborhood of $\Omega=\{z=s+it, |s|\le 1/2, |t|\le 1/16\}$ such that  $u(n,m_0)=f(n/N)$ and $|f|\le C_0M$ in $\Omega$.
\end{corollary}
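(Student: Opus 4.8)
The plan is to combine the discrete Poisson representation \eqref{eq:Poisson} with the holomorphic extension of the Poisson kernel provided by Lemma \ref{pr:ext}. Since $|m_0| < N/2 < N$, the point $(n,m_0)$ is never one of the four corners of $Q_N$, so $(n,m_0) \in Q_N \setminus K_N$ for every integer $n$ with $|n| \le N$, and \eqref{eq:Poisson} gives
\[ u(n,m_0) = \sum_{y \in \partial Q_N} P((n,m_0),y)\, u(y). \]

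Next, fix $y \in \partial Q_N$. By the explicit formula \eqref{eq:Pker} (and its counterparts for the other three sides of $\partial Q_N$, obtained by symmetry), the function $z \mapsto P((zN,m_0),y)$ is an entire function of $z$; in particular it is holomorphic in a neighborhood of $\Omega$, and by Lemma \ref{pr:ext} — applicable because $m_0$ is an integer in $[-N/2,N/2]$ — it satisfies $|P((zN,m_0),y)| \le C N^{-1}$ for $z \in \Omega$. I would then set
\[ f(z) = f_{m_0}(z) := \sum_{y \in \partial Q_N} P((zN,m_0),y)\, u(y), \]
a finite sum of functions each holomorphic in a neighborhood of $\Omega$, hence itself holomorphic there.

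It remains to verify the two asserted properties. For the bound: using $|u(y)| \le M$, the cardinality $|\partial Q_N| = 4(2N-1) \le 8N$, and the kernel estimate above, we get $|f(z)| \le 8N \cdot C N^{-1} \cdot M = 8CM$ for $z \in \Omega$, so one may take $C_0 = 8C$. For the identity: evaluating the definition of $f$ at $z = n/N$ for an integer $n$ (when $|n| \le N/2$ the point $n/N$ lies in $\Omega$) and comparing with the Poisson formula displayed above yields $f(n/N) = u(n,m_0)$.

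The corollary is essentially a repackaging of Lemma \ref{pr:ext}, so I do not expect a genuine obstacle. The only minor points requiring attention are that \eqref{eq:Poisson} applies only to $x \in Q_N \setminus K_N$, which is ensured here by $|m_0| < N/2$, and that the kernel estimate of Lemma \ref{pr:ext}, stated and proved there for one side of the square, carries over verbatim to the remaining three sides after the obvious reflections.
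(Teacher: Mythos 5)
Your argument is exactly the one the paper intends (the paper simply states that the corollary "follows immediately from Lemma \ref{pr:ext} and the Poisson representation formula \eqref{eq:Poisson}"), and your elaboration — defining $f(z)=\sum_{y\in\partial Q_N}P((zN,m_0),y)\,u(y)$, noting analyticity of each summand, and bounding via $|\partial Q_N|\le 8N$ together with the $CN^{-1}$ kernel estimate — is correct. The one small correction is that Lemma \ref{pr:ext} as stated already treats all four sides of $\partial Q_N$ (both cases $|n_1|=N$ and $|m_1|=N$ appear in its proof), so no separate reflection argument is needed.
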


The corollary follows immediately from Lemma \ref{pr:ext} and the Poisson representation formula \eqref{eq:Poisson}. We will use this quantitative analyticity of $f$ for the following estimate. Let
$P_d$ be the Taylor polynomial of $f$ of degree $d$ centered at the origin. Then the standard Cauchy estimate implies 
\begin{equation}\label{eq:anal}
|f(z)-P_d(z)|\le \max_{\Omega}|f| (L|z|)^{d+1},\quad |z|<1/L,
\end{equation} for some $L=L(\Omega)$.

We remark also that the corollary above, combined with the Cauchy estimates for derivatives of analytic functions, implies  that if $q,q'\in Q_{N/3}$ are two neighboring cells then
\begin{equation}\label{eq:grad}
|u(q)-u(q')|\le CN^{-1}\max_{Q_N}|u|
\end{equation}
for any discrete harmonic function $u$ in $Q_N$. This in turn implies  the gradient estimate that we used in the proof of Theorem (B).

\subsection*{Proof of Theorem \ref{th:three}}

To prove Theorem \ref{th:three}, it is enough to prove the following statement for some $\gamma\in(0,1/2)$:\\
 {\it if $u$ is a discrete harmonic on $Q_N$  are such that  $|u|<M$ on $Q_N$ and $|u|<\sigma$ in at least a half of the cells in  $Q_{[\gamma N]}$, then $|u|<C(M^\beta\sigma^{1-\beta}+e^{-cN}M)$ on $Q_{2[\gamma N]}$}. 

Indeed, the statement of the theorem follows from this one by iterations. First we divide the square $Q_{N/4}$ into ones with the side length $\approx \gamma N/2$, find one of such squares where  $|u|<1$ in at least half of the cells. Then we iterate the estimate finitely many times using the following observation: if  we fix $M$ and for $\sigma,c,C>0$ and $\beta\in(0,1)$ define
$r(\sigma,\beta,C,c)=C(M^\beta\sigma^{1-\beta}+e^{-cN}M)$
then 
\begin{equation}\label{eq:longr}
r(r(\sigma,\beta_1,C_1, c_1),\beta,C,c)\le r(\sigma, \beta+\beta_1-\beta\beta_1, C(C_1^{1-\beta}+1), \min\{c_1(1-\beta),c\}).\end{equation}
The last inequality can be shown by an elementary computation which we skip.

We will prove the statement when $\gamma<2^{-8}L^{-1}$, where $L$ is the constant from \eqref{eq:anal}. First we consider an integer $m\in [-\gamma N, \gamma N]$ such that $|u(n,m)|<\sigma$ for at least quarter of integers $n\in[-\gamma N,\gamma N]$ and propagate the estimate in the  horizontal direction.

By Corollary \ref{l:ap1} there is an analytic function $f$ in $\Omega$ bounded by $C_0M$ and such that  $f(n/N)=u(n,m)$ when $|n|<N/2$. By the assumption of the statement, there exist points $s_j\in[-\gamma,\gamma]$, $j=1,..., J$ such that $|f(s_j)|\le \sigma$ and $J\ge \frac12\gamma N$,  and $|s_j-s_i|\ge 1/N$ when $i\neq j$.

We consider now two cases, (i) $C_0M(2\gamma L)^{[J/2]}<\sigma$ and (ii) $C_0M(2\gamma L)^{[J/2]}\ge \sigma$.

(i) We choose $J_0\le [J/2]$ such that $2\gamma L\sigma<C_0M(2\gamma L)^{J_0}<\sigma$ and let $P$ be the Taylor polynomial of $f$ of degree $J_0-1$. The inequality $|f(s_j)|<\sigma$ and the estimate \eqref{eq:anal} imply $|P(s_j)|\le 2\sigma$ for $j=1,...,J$. Then, by a normalized version of Corollary \ref{c:Remez}, we obtain
\[|P(s)|\le \left(\frac{16N\gamma}{J-J_0+1}\right)^{J_0}2\sigma,\quad s\in[-2\gamma, 2\gamma].\]
We have $J-J_0+1\ge J/2\ge \frac{\gamma}{4}N$ which implies $|P(s)|\le 64^{J_0-1}2\sigma$.
Furthermore, using the inequalities \eqref{eq:anal} and  $64<(4\gamma L)^{-1}$ , we get
\[|f(s)|\le |P(s)|+C_0M(2L\gamma)^{J_0}\le 64^{J_0-1}2\sigma+\sigma<2^{2-J_0}C_0M+\sigma,\quad s\in[-2\gamma, 2\gamma].\] Thus $\max_{[-2\gamma,2\gamma]}|f|\le CM^{\beta}\sigma^{1-\beta}$.

(ii) If $\delta=C_0M(2\gamma L)^{[J/2]}\ge \sigma$ then we approximate $f$ by the  Taylor polynomial $P$ of degree $[J/2]-1$. By \eqref{eq:anal} and the inequality $|f(s_j)|<\sigma$, we have $|P(s_j)|\le 2\delta$. Then, applying Corollary \ref{c:Remez} once again, we get $|P(s)|\le 64^{[J/2]-1}2\delta$ on $[-2\gamma,2\gamma]$ and 
\[|f(s)|\le 64^{[J/2]-1}2\delta+C_0M(2L\gamma)^{[J/2]}\le 2C_0M(2^7\gamma L)^{[J/2]}\le Ce^{-cN}M. \]
Thus we conclude that $|u(n,m)|<r(\sigma,\beta,C,c)$ for $m$ chosen as above and all integer $n\in[-2\gamma N, 2\gamma N]$. Note that the number of horizontal lines on which we did propagation is at least one quarter of the integers in $[-\gamma N, \gamma N]$. Now we repeat the argument and propagate smallness from horizontal lines to each vertical one and apply \eqref{eq:longr} again. This completes the proof of the statement and of Theorem \ref{th:three}.


\begin{thebibliography}{ZZZZ}
\bibitem{B} B. Bojanov, {\it Elementary proof of the Remez Inequality}, The American Math. Monthly, {\bf 100}, no. 5 (1993), 483--485. %doi:10.2307/2324304.
\bibitem{C} J. Capoulade, {\it Sur Quelques propi\'et\'es des fonctions harmoniques et des fonctions pr\' eharmoniques}, Mathematica (Cluj), {\bf 6} (1932),  146--151.
\bibitem{E} A. Eremenko, Mathoverflow, www.mathoverflow.net/questions/190837/entire-function-bounded-at-every-line
\bibitem{GM} M. Guadie, E. Malinnikova,  {\it On three balls theorem for discrete harmonic functions}, Comp. Meth. Funct. Theory.  {\bf 14}, no. 4(2014), 721--734. % doi:10.1007/s40315-014-0076-9.
\bibitem{G} M. de Guzman, Differentiation of integrals in $\mathbb{R}^n$, Lecture Notes in Mathematics 481, Springer, 1975. 
\bibitem{H} H. A. Heilbronn, {\it On discrete harmonic functions},  Math. Proc. Cambridge Phil. Soc., {\bf{45}}, no. 2 (1949),  194--206. %doi:10.1017/S0305004100024713
\bibitem{N1} N. S. Nadirashvili, \textit{Estimation of the solutions of elliptic equations with analytic coefficients which are bounded on some set}, Vestnik Mosk. Univ. Ser. I {\bf{2}} (1979), 42--46.
%\bibitem{N} D. J. Newman, {\it An entire function bounded in every direction}, Amer. Math. Monthly, {\bf 83}, no. 4 (1976), 192--193, doi:10.2307/2977024
\bibitem{PS} G. P\'olya, G. Szeg\"o, Problems and Theorems in Analysis I, Classics in Mathematics, vol. 193, Springer, 1998.
\bibitem{R} E. J. Remez, {\it Sur une propri\'et\'e des polyn\^omes de Tchebyscheff}, Comm. Inst. Sci. Kharkow. {\bf 13} (1936), 93--95.

\bibitem{T} T. Tao, An Introduction to Measure Theory, Graduate Studies in Math., vol. 126, AMS, 2011.
\end{thebibliography}
\end{document}